\date{7 November 2017}
\theoremstyle{plain}
\newtheorem{theorem}{Theorem}[section]
\theoremstyle{definition}
\newtheorem{df}[theorem]{Definition}
\newtheorem{remark}[theorem]{Remark}
\newtheorem{notation}[theorem]{Notation}
\theoremstyle{plain}
\newtheorem{prp}[theorem]{Proposition}
\newtheorem{lemma}[theorem]{Lemma}
\newcommand{\Singular}{\textsc{Singular}}
\newcommand{\tY}{\widetilde{Y}}
\DeclareMathOperator{\Mon}{Mon}
\DeclareMathOperator{\id}{id}
\DeclareMathOperator{\parm}{par}
\DeclareMathOperator{\m}{\mathfrak{m}}
\DeclareMathOperator{\jet}{jet}
\DeclareMathOperator{\supp}{supp}
\DeclareMathOperator{\Jac}{Jac}
\DeclareMathOperator{\sign}{sign}
\DeclareMathOperator{\dash}{\!\textnormal{-}\!}
\DeclareMathOperator{\NF}{NF}
\DeclareMathOperator{\N}{\mathbb{N}}
\DeclareMathOperator{\Q}{\mathbb{Q}}
\DeclareMathOperator{\R}{\mathbb{R}}
\DeclareMathOperator{\C}{\mathbb{C}}
\DeclareMathOperator{\K}{\mathbb{K}}
\DeclareMathOperator{\coeff}{coeff}
\setlist[enumerate]{itemindent=*,leftmargin=*}
\begin{document}

\title[The Classification of Real Singularities Using \textsc{Singular}, %
Part III]{The Classification of Real Singularities Using \textsc{Singular} Part III: Unimodal Singularities of Corank 2}

\thanks{This research was supported by grant KIC14081491583 and grant 109327 (Incentive Funding for Rated Researches) of the National Research Foundation (NRF), by grants awarded by the University of Pretoria, and by SPP 1489 and SFB-TRR 195 of the German Research Foundation (DFG). Part of the research was done during a visit of the second author at the Riemann Center for Geometry and Physics at Leibniz Universit\"{a}t Hannover supported by a Riemann fellowship.}

\author{Janko B\"ohm}
\address{Department of Mathematics\\
University of Kaiserslautern\\
Erwin-Schr\"odinger-Str.\\
67663 Kaiserslautern\\
Germany}
\email{boehm@mathematik.uni-kl.de}

\author{Magdaleen S.\@ Marais}
\address{University of Pretoria and African Institute for Mathematical Sciences\\
Department of Mathematics and Applied Mathematics\\
Private bag X20\\
Hatfield 0028\\
South Africa}
\email{magdaleen.marais@up.ac.za}

\author{Andreas Steenpa\ss}
\address{Department of Mathematics\\
University of Kaiserslautern\\
Erwin-Schr\"odin\-ger-Str.\\
67663 Kaiserslautern\\
Germany}
\email{steenpass@mathematik.uni-kl.de}

\begin{abstract}

We present a classification algorithm for isolated hypersurface singularities of corank $2$ and modality $1$ over the real numbers. For a singularity given by a polynomial over the rationals, the algorithm determines its right equivalence class by specifying all representatives in Arnold's list of normal forms \citep{AVG1985} belonging to this class, and the corresponding values of the moduli parameter. 

We discuss how to computationally realize the individual steps of the algorithm for all singularities under consideration, and give explicit examples. The algorithm is implemented in the \textsc{Singular} library \texttt{realclassify.lib}.

\end{abstract}

\keywords{
hypersurface singularities, algorithmic classification, real algebraic geometry
}

\maketitle

\setlength{\topmargin}{0.5cm}
\setlength{\textheight}{21.8cm}
\setlength{\textwidth}{14cm}
\setlength{\oddsidemargin}{1cm}
\setlength{\evensidemargin}{1cm}

\section{Introduction}
 In the ground breaking work on singularities by \citet{A1974}, all isolated hypersurface singularities over the complex numbers up to modality $2$ have been classified. Arnold has also given fundamental theorems on the classification of real singularities up to modality $1$, which has been made explicit in \citet{AVG1985}. For his classification, Arnold considers stable equivalence of functions. Two function germs are stably equivalent if they are right equivalent after the  direct addition of a non-degenerate quadratic form. Over a field $\mathbb{K}$, two function germs $f,g\in\m^2\subset\K[[x_1,\ldots,x_n]]$, where $\m=\langle x_1,\ldots,x_n\rangle$, are right equivalent if there is a $\K$-algebra automorphism $\phi$ of $\K[[x_1,\ldots,x_n]]$ such that $\phi(f)=g$. 
 For $\mathbb{K}=\mathbb{C}$, modality $\leq 2$, and $\mathbb{K}=\mathbb{R}$, modality $\leq 1$, Arnold presents a finite list of normal forms, each of which is a family of polynomial equations with moduli parameters such that each equivalence class contains at least one, but finitely many, elements of these families. We refer to such elements as normal form equations.
 Over the complex numbers, Arnold gives an algorithmic determinator which, for certain classes of power series, determines a normal form, in which a representative of a given power series lies. In \citet{classify2} an algorithm is presented which, in addition, computes an explicit complex normal form equation for power series of modality and corank less or equal to $2$. For polynomial input, this algorithm has been implemented in the library \citep{classify2lib} for the computer algebra system \textsc{Singular} \citep{DGPS}. 

Considering singularities over the reals, an algorithm for computing the degenerate part and the inertia index is given in \citet{realclassify1}. For singularities of modality $0$, an algorithm to determine the corresponding normal form is developed in the same article. 

In the case of singularities of modality $1$, it turns out that an equivalence class may contain several normal form equations, as specified by Arnold. In \citet{realclassify2}, a complete classification of singularities up to modality $1$ and corank $2$ is given, in the sense that all complex and real equivalences between complex, respectively real, normal form equations are determined.

In this paper, we develop a determinator for real singularities of modality $1$ and corank $2$, which computes, for a given input polynomial, all normal form equations in Arnold's list (see Table \ref{tab:normal_forms}). In this setting, the complex types correspond to real main types, which split up into real subtypes by modifying the signs of the terms in the normal form of the real main type, except in the case $Y_{r,r}$.  This complex normal form splits up into the real main types $\widetilde Y_r$ and $Y_{r,r}$. 

In fact, we describe an algorithm which computes, for an arbitrary input polynomial $f\in\m^3\subset\Q[x,y]$, the following data:
all real singularity subtypes of $f$ as well as all normal form equations in the right equivalence class of $f$ with the respective parameter given as the unique real root of a minimal polynomial over $\Q$ in a specified interval. The algorithm is implemented in the \textsc{Singular} library \texttt{realclassify.lib}. 

This paper is structured as follows: In Section \ref{Definitions and Preliminary Results}, we introduce the fundamental definitions and give the required background on the classification of singularities. In Section \ref{section:generalAlg}, we develop a general algorithm for the classification of real singularities of modality $1$ and corank $2$. Although the general algorithm is applicable to all cases under consideration, some steps do not have a straight-forward implementation for  certain types of singularities. Therefore, in the subsequent sections, we give explicit computational realizations for all steps of the algorithm. The exceptional cases ($W_{12}, W_{13},Z_{11}, Z_{12}, Z_{13},  E_{12}, E_{13}, E_{14} $) follow the general algorithm in a direct way. In Section \ref{sec:Excep}, as an example, we give an explicit algorithm for the case $E_{14}$. Section \ref{sec:Parab} handles the parabolic cases ($X_9, J_{10}$). Section \ref{sec:Hyper} deals with the hyperbolic cases ($X_{9+k}$, $ J_{10+k}$, $Y_{r,s}$, $\widetilde Y_r$). The cases $X_{9+k}$ and $ J_{10+k}$ follow the general algorithm in a straight-forward manner, whereas the cases $Y_{r,s}$ and $\widetilde Y_r$ require some attention to detail.

\vspace{0.1in} 

\noindent\emph{Acknowledgements}. We would like to thank Gert-Martin Greuel, and Gerhard Pfister for many fruitful discussions, and Domnik Bendle and Clara Petroll for implementing a multivariate real root isolation algorithm in the \Singular-library  \texttt{rootisolation.lib} \cite{rootisolationlib}.

\section{Definitions and Preliminary Results}\label{Definitions and Preliminary Results}
In this section we give some basic definitions and results, as well as some notation that will be used throughout the paper.

We only consider classes of germs with respect to right equivalence.

\begin{df} 
Let $\K$ be a field. Two power series $f,g\in\K[[x_1,\ldots,x_n]]$ are called {\bf right equivalent}, denoted by $f\overset{\K}{\sim}g$, if there exists a $\K$-algebra automorphism $\phi$ of $\K[[x_1,\ldots,x_n]]$ such that $\phi(f)=g$. If $\K=\R$, we also write $\sim$ to denote $\overset{\R}\sim$.
\end{df}

Using this equivalence relation, \cite{A1974} gives the following formal definition of a normal form. From now on, let $\K$ be either $\R$ or $\C$. 

\begin{df}\label{def nfequ}
Let $K\subset \K[[x_1,\ldots, x_n]]$ be a union of equivalence classes with respect to the relation $\overset{\K}{\sim}$. A {\bf normal form} for $K$ is given by a smooth map
\[\Phi:B\longrightarrow \K[x_1,\ldots,x_n]\subset\K[[x_1,\ldots,x_n]]\]
of a finite dimensional $\K$-linear space $B$ into the space of polynomials for which the following three conditions hold:
\begin{itemize}[leftmargin=10mm]
\item[(1)] $\Phi(B)$ intersects all equivalence classes of $K$;
\item[(2)] the inverse image in $B$ of each equivalence class is finite;
\item[(3)] $\Phi^{-1}(\Phi(B)\setminus K)$ is contained in a proper hypersurface in $B$.
\end{itemize}
We call the elements of the image of $\Phi$ {\bf normal form equations}.
\end{df}

\begin{remark}
There is a type $T$ associated to each of Arnold's corank $2$ real normal forms of modality $0$ and $1$ (see Table \ref{tab:normal_forms}), and we denote the corresponding normal form by $\NF(T)$. Depending on whether $T$ is a real or a complex type, we consider $\K=\R$ or $\K=\C$. For $b\in \parm(\NF(T)):=\Phi^{-1}(K)$ with $K$ as in Definition \ref{def nfequ}, we write $\NF(T)(b)=\Phi(b)$ for the corresponding normal form equation.
\end{remark}

We briefly introduce the concepts of weighted jets, filtrations, Newton polygons, and permissible chains. For
background regarding the definitions in this section, we refer to
\citet{A1974} and \cite{PdJ2000}.

\begin{df}
Let $w=(c_1,\ldots,c_n)\in\N^n$ be a weight on the variables $(x_1,\ldots,x_n)$. We define the weighted degree on $\Mon(x_1,\ldots,x_n)$ by $w\dash\deg(\prod_{i=1}^nx_i^{s_i})=\sum_{i=1}^n c_i s_i$. If the weight on all variables is $1$, we call the weighted degree of a monomial $m$ the standard degree of $m$ and write $\deg(m)$ instead of $w\dash\deg(m)$.

A polynomial is called  {\bf quasihomogeneous} of degree $d$ with respect to $w$ if $w\dash\deg(t)=d$ for any term $t$ of $f$.

\end{df}

\begin{df}\label{def:piecewiseWeight}
Let $w=(w_1,\ldots,w_s)\in(\N^n)^s$ be a finite family of weights on the variables $(x_1,\ldots,x_n)$. For any monomial $m\in \K[x_1,\ldots,x_n]$ (or term $t=c\cdot m$, $c\in \K$),  we define the {\bf piecewise weight} with respect to $w$ as
$$
w\dash\deg(m):=\min_{i=1,\ldots,s}w_i\dash\deg(m).
$$
\end{df}

\begin{df}
Let $w$ be a (piecewise) weight on $\Mon(x_1,\ldots,x_n)$.
\begin{enumerate}[leftmargin=10mm]
\item
Let $f = \sum_{i = 0}^{\infty} f_{i}$ be the decomposition of
$f \in \K[[x_1,\ldots,x_n]]$ into weighted homogeneous parts $f_{i}$ of
$w$-degree $i$. We denote the {\bf weighted $j$-jet} of $f$ by
\[
w \dash \jet(f, j) := \sum_{i = 0}^j f_{i} \,.
\]
The sum of terms of $f$ of lowest $w$-degree is the {\bf principal part} of $f$ with respect to~$w$.

\item A power series in $\K[[x_1,\ldots,x_n]]$ has {\bf filtration} $d \in \N$ if all its
monomials are of weighted degree $d$ or higher. The power series of filtration
$d$ form a vector space
\[
E_d^w \subset \K[[x_1,\ldots,x_n]] \,.
\]
\item A power series $f\in\K[[x_1,\ldots,x_n]]$ is a {\bf germ with non-degenerate Newton principal part} if $f$ has filtration $d\in\N$ with respect to $w$ and if
the  principal part $w\dash\jet(f,d)$ is non-degenerate, that is, if its Milnor number is finite. If $w$ consists out of a single weight, we call $f$ {\bf semi-quasihomogeneous} and $w\dash\jet(f,d)$ the {\bf quasihomogeneous part} of $f$.
\item A power series $f\in\K[[x_1,\ldots,x_n]]$ is {\bf weighted $k$-determined} with respect to the weight $w$ if
\[ f\sim w\dash\jet(f,k)+g\qquad\text{for all } g\in E_{k+1}^w.\]
We define the {\bf weighted determinacy} of $f$ as the minimum number $k$ such that $f$ is $k$-determined.
\end{enumerate}
\end{df}

\begin{remark}
\begin{enumerate}[leftmargin=10mm]
\item If for a given type $T$, $w\dash\jet(\NF(T)(b),j)$ is independent of $b\in\parm(\NF(T))$, we denote it by $w\dash\jet(T,j)$. 
\item Note that $d < d'$ implies $E_{d'}^w \subseteq E_d^w$. Since elements of
the product $E_{d'}^w \cdot E_d^w$ have filtration $d'+d$, it follows that $E_{d}^w$ is an
ideal in the ring of power series. 
\item If the weight
of each variable is $1$, we write $E_d$ and $\jet(f,j)$ instead of  $E_d^w$ and $w\dash\jet(f,j)$, respectively. 
\end{enumerate}
\end{remark}

There are also similar concepts for coordinate transformations:

\begin{df}\label{phi}
Let $\phi$ be a $\K$-algebra automorphism of $\K[[x_1,\ldots,x_n]]$ and let
$w$ be a weight on $\Mon(x_1,\ldots,x_n)$.

\begin{enumerate}[leftmargin=10mm]
\item
For $j > 0$ we define the \emph{$w\dash\jet(\phi,j)$}, denoted by $\phi_j^w$,
to be the automorphism given by
\[
\phi_j^w(x_i) := w\dash\jet(\phi(x_i),w\dash\deg(x_i)+j) \quad
\text{for all }i = 1,\ldots,n \,.
\]
If the weight of each variable is $1$, that is, \@ $w = (1, \ldots, 1)$, we simply
write $\phi_j$ for $\phi_j^w$.

\item\label{enum:filtration}
$\phi$ has filtration $d$ if, for all $\lambda \in \N$,
\[
(\phi-\id)E_\lambda^w \subset E_{\lambda+d}^w \,.
\]
\end{enumerate}
\end{df}

\begin{remark}
Note that $\phi_0(x_i) = \jet(\phi(x_i), 1)$ for all $i = 1, \ldots, n$.
Furthermore note that $\phi_0^w$ has filtration $\le 0$ and
that $\phi_j^w$ has filtration $j$ if $j > 0$ and $\phi_{j-1}^w = \id$.
\end{remark}

For the next definition we restrict ourselves to the power series ring $\K[[x,y]]$ in two variables.
 
 \begin{df}
 Let $f=\sum_{i,j}a_{i,j}x^{i}y^{j}\in\K[[x,y]]$, let $T$ be a corank $2$ singularity type and $0\neq b\in\parm(\NF(T))$. We call
 \begin{eqnarray*}
 \supp(f)&:=&\{x^{i}y^{j}\ |\ a_{i,j}\neq 0\} \text{ and }\\
 \supp(T)&:=&\supp(\NF(T)(b)),\\
 \end{eqnarray*}
 where $b\in\operatorname{par}(\NF(T))$ is generic, the support of $f$ and $\NF(T)$, respectively. We write $\supp(T,j):=\supp(\jet(\NF(T)(b),j))$ for $j\geq 0$. Let
$$\Gamma_+(T):=\displaystyle{\bigcup_{x^{i} y^{j}\in\supp(T)}}((i,j)+\R^2_+)$$
and let $\Gamma(T)$ be the boundary of the convex hull of $\Gamma_+(T)$ in $\R^2$. Then:
\begin{enumerate}[leftmargin=10mm]
\item $\Gamma(T)$ is called the {\bf Newton polygon} of $\NF(T)$.
\item The compact segments of $\Gamma(T)$ are called {\bf faces}. 
\item Let $\Delta$ be a face of $\Gamma(T)$. Then $\Delta$ induces a weight $w$ on $\Mon(x,y)$ in the following way: If $\Delta$ has slope $-\frac{w_x}{w_y}$, in lowest terms, and $w_x,w_y>0$, we set $w\dash\deg(x)=w_x$ and $w\dash\deg(y)=w_y$. 
 \item If $w_1,\ldots,w_s$ are the weights associated to the faces of $\Gamma (T)$, ordered by increasing slope, there are unique minimal integers $\lambda_1,\ldots,\lambda_s\geq 1$ such that the piecewise weight associated to $w(T)=(\lambda_1 w_1,\ldots,\lambda_s w_s)$ by Definition \ref{def:piecewiseWeight} is constant on $\Gamma(T)$.  We denote this constant by $d(T)$. 
\item A monomial $m$ lies strictly underneath, on or above $\Gamma(T)$ if, respectively, the $w(T)$-degree of $m$ is less than, equal to or greater than $d(T)$.
 \end{enumerate}
 \end{df}

 \begin{notation}
Given  $f\in\mathbb{K}[[x,y]]$ and $m\in \Mon(x,y)$, we write $\coeff(f,m)$ for the coefficient of $m$ in $f$. 
\end{notation}

 \begin{df}\label{def:segments}
 Let $f\in\K[x,y]$ be piecewise quasihomogeneous such that each partial derivative of $f$ is a binomial. 
 \begin{enumerate}[leftmargin=10mm]
 \item The convex hull of the exponent vectors of the monomials of $\frac{\partial f}{\partial x}$ is called the {\bf fundamental $x$-segment}, similarly for $\frac{\partial f}{\partial y}$.
 \item A translate  of a fundamental segment by an integral vector with non-negative components is called a {\bf permissible segment}.
 \item Two permissible segments are said to be {\bf joined} if they have a common end.
 \item A {\bf permissible chain} is a sequence of permissible segments such that each consecutive two of them are joined.
 \end{enumerate}
 \end{df}
 
 See \cite[Example 9.6]{A1974} for an example.
 
 \begin{remark}
 \begin{enumerate}[leftmargin=10mm]
 \item The assumptions of Definition \ref{def:segments} are satisfied for the principal parts of all corank $2$ singularities of modality $0$ and $1$.
 \item Suppose $f$ is a corank $2$ singularity of modality $1$ whose Newton polygon has two faces. Then a permissible $x$-segment is not parallel to any permissible $y$-segment, that is, they are either disjoint, or intersect in a single point.
 \end{enumerate}
 \end{remark}
 
 \begin{df}
 The {\bf Jacobian ideal} $\Jac(f)\subset \K[[x,y]]$ of $f$ is generated by the partial derivatives of $f$. The {\bf local algebra} $Q(f)$ is the residue class ring of the Jacobian ideal of~$f$.
 \end{df}
  
 \begin{prp}[{\citet{A1974}, Proposition 9.7}]
 If the exponent vector of a monomial lies in an infinite permissible chain, then it lies in the Jacobian ideal of $f$.
 \end{prp}

 In \citep[Section 7]{A1974} the following results are used for the classification of singularities of corank~$2$. 

\begin{df}
Suppose $f$ is a germ, $e_1,\ldots,e_\mu$ are monomials representing a basis of the local algebra of $f$, and $e_1,\ldots, e_s$ are the monomials in this basis above or on $\Gamma(T)$. We then call $e_1,\ldots, e_s$ a {\bf system} of the local algebra of $f$. 
\end{df}

\begin{theorem}
Let $f$ be a semi-quasihomogeneous function with quasihomogeneous part $f_0$, and let $e_1,\ldots,e_\mu$ be monomials representing a basis of the local algebra of $f_0$. Then $e_1,\ldots,e_\mu$ also represent a basis of the local algebra of $f$. 
\end{theorem}
 
 \begin{theorem}\label{thm:quasiNF}
 Let $f$ be a semi-quasihomogeneous function with quasihomogeneous part $f_0$ and let $e_1,\ldots,e_s$ be a system of the local algebra of $f_0$. Then $f$ is equivalent to a function of the form $f_0+\sum c_ke_k$.
 \end{theorem}
Arnold proved Theorem \ref{thm:quasiNF} by iteratively applying the following lemma.

\begin{lemma}\label{lemma:quasiNF}
Let $f_0$ be a quasihomogeneous function of weighted $w$-degree $d_w$ and let $e_1,\ldots,e_r$ be the monomials of a given degree $d'>d_w$ in a system of the local algebra of $f_0$. Then, for every series of the form $f_0+f_1$, where the filtration of $f_1$ is greater than $d_w$, we have
\[f_0+f_1\sim f_0+f'_1,\]
where the terms in $f'_1$ of degree less than $d'$ are the same as in $f_1$, and the part of degree $d'$ can be written as $c_1e_1+\cdots+c_re_r$, $c_1,\ldots,c_r\in\R$.
\end{lemma}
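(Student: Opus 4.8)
The plan is to produce a single coordinate change $\phi$, tangent to the identity, that removes exactly the part of the weighted degree $d'$ component of $f_1$ lying in $\Jac(f_0)$ while leaving its class modulo $\Jac(f_0)$ — represented by $e_1,\ldots,e_r$ — and all lower-degree terms untouched, at the cost of altering only terms of $w$-degree $>d'$. First I would use that $f_1$ has filtration greater than $d_w$ to decompose it by weighted degree as $f_1 = u + p + q$, where $u$ collects the terms of $w$-degree strictly between $d_w$ and $d'$, where $p$ is the $w$-homogeneous part of degree exactly $d'$, and where $q \in E_{d'+1}^w$.

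Next, since $f_0$ is quasihomogeneous, $\Jac(f_0)$ is a weighted-homogeneous ideal and the local algebra $Q(f_0)$ inherits a grading by $w$-degree. By the definition of a system in Theorem~\ref{thm:quasiNF}, the monomials $e_1,\ldots,e_r$ are precisely the degree-$d'$ elements of a monomial basis of $Q(f_0)$, so their residue classes form a basis of the graded piece $Q(f_0)_{d'}$. Hence the class of $p$ is a unique combination $\sum_k c_k e_k$ with $c_k\in\R$, which gives $p = \sum_k c_k e_k + h$ with $h\in\Jac(f_0)$ homogeneous of $w$-degree $d'$. Writing $h = \sum_{i=1}^n a_i \frac{\partial f_0}{\partial x_i}$ and using that $\frac{\partial f_0}{\partial x_i}$ is homogeneous of degree $d_w - w\dash\deg(x_i)$, I may take each $a_i$ homogeneous of degree $d' - d_w + w\dash\deg(x_i)$.

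I would then define the automorphism $\phi$ by $\phi(x_i) := x_i - a_i$. Since $d' > d_w$, each $a_i$ has $w$-degree strictly larger than $w\dash\deg(x_i)$, so $\phi_0^w = \id$ and $\phi$ has filtration $d' - d_w > 0$. A weighted Taylor expansion gives
\[ \phi(f_0) = f_0 - \sum_{i=1}^n a_i\frac{\partial f_0}{\partial x_i} + R = f_0 - h + R, \]
where every second- or higher-order term in $R$ has $w$-degree at least $2d' - d_w > d'$, hence $R \in E_{d'+1}^w$. For the perturbation, the filtration bound $(\phi-\id)E_\lambda^w \subset E_{\lambda+(d'-d_w)}^w$ combined with $f_1 \in E_\lambda^w$ for some $\lambda > d_w$ yields $(\phi-\id)(f_1)\in E_{d'+1}^w$, so $\phi(f_1) = f_1 + (\text{terms of } w\dash\deg > d')$.

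Adding the two contributions gives
\[ \phi(f_0+f_1) = f_0 + u + \sum_k c_k e_k + (\text{terms of } w\dash\deg > d'), \]
and setting $f_1' := \phi(f_0+f_1) - f_0$ produces a series whose terms of degree less than $d'$ agree with those of $f_1$ (namely $u$) and whose degree-$d'$ part is $c_1 e_1 + \cdots + c_r e_r$, as claimed; the equivalence $f_0+f_1 \sim f_0+f_1'$ is realized by $\phi$. The main obstacle will be the degree bookkeeping: one must check carefully that both the nonlinear remainder $R$ and the perturbation $(\phi-\id)(f_1)$ land in $E_{d'+1}^w$, so that everything of $w$-degree $\le d'$ is governed solely by the linear term $-h$. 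This is exactly what the hypothesis $d' > d_w$ secures, and it is also what makes $\phi$ an admissible near-identity change rather than merely a formal one.
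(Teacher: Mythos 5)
Your proof is correct and follows essentially the same route as the paper's: decompose the weighted degree-$d'$ part as an element of $\Jac(f_0)$ plus a combination $\sum_k c_k e_k$ (using that the $e_k$ are the degree-$d'$ monomials of a basis of the graded local algebra), and remove the Jacobian part via the coordinate change $x_i \mapsto x_i - a_i$. Your version merely makes explicit the degree bookkeeping (homogeneity of the $a_i$, the filtration of $\phi$, and why the Taylor remainder and $(\phi-\id)(f_1)$ land in $E_{d'+1}^w$) that the paper leaves implicit.
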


\begin{proof}
Let $g({\bf x})$ denote the sum of the terms of degree $d'$ in $f_1$. There exists a decomposition of $g$ of the form
\[g({\bf x})=\sum_i\frac{\partial f_0}{\partial x}v_i({\bf x})+c_1e_1+\cdots +c_re_r,\]
since $e_1,\ldots,e_r$ represents a monomial vectorspace basis for the local algebra of $f_0$. Applying the transformation defined by
\[x_i\mapsto x_i-v_i({\bf x})\]
to $f$, we transform $f$ to
\[f_0({\bf x})+\left( f_1({\bf x})+(c_1e_1({\bf x})+\cdots+c_re_r({\bf x})-g({\bf x})\right)+R({\bf x}),\]
where the filtration of $R$ is greater than $d'$.
\end{proof}

The following result is proved for $a\ge 4$, $b\ge 5$ and $\K=\C$ by
\citet{A1974}. The same proof is also valid for $a=3$, $b\ge 7$. Moreover the proof is also valid for $\K=\R$. Hence we obtain the following more general result. 

\begin{lemma}\label{principalpart}
Let $\K$ be either $\R$ or $\C$. Suppose that $f\in\K[[x,y]]$ is a germ with non-degenerate Newton principal part $f_0=x^a+\lambda x^2y^2+y^b$, where
$0\neq\lambda\in\K$ and $a,b\in\N$, such that $\Gamma(f_0)$ has two faces. Then
\[f\sim f_0.\]
\end{lemma}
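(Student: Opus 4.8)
The plan is to recognize $f$ as a (piecewise) semi-quasihomogeneous series and to apply Theorem \ref{thm:quasiNF}, which reduces the statement to a purely combinatorial claim about the local algebra of $f_0$. Since $\Gamma(f_0)$ has two faces, it induces a piecewise weight $w$ of degree $d$ for which $f_0$ is piecewise quasihomogeneous; as $f$ has filtration $d$ and $f_0$ is non-degenerate by hypothesis, $f$ is semi-quasihomogeneous with quasihomogeneous part $f_0$. Theorem \ref{thm:quasiNF} then gives $f\sim f_0+\sum_k c_k e_k$, where $e_1,\dots,e_s$ are the monomials in a vector space basis of the local algebra $Q(f_0)$ whose $w$-degree exceeds $d$. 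Hence it suffices to prove that there are no such basis elements, i.e.\ that every monomial lying strictly above $\Gamma(f_0)$ (equivalently, of $w$-degree greater than $d$) belongs to the Jacobian ideal $\Jac(f_0)$.

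To show this I would use the permissable-chain criterion of \citet{A1974}: a monomial whose exponent lies in an infinite permissable chain belongs to $\Jac(f_0)$. Both partial derivatives are binomials, $\frac{\partial f_0}{\partial x}=ax^{a-1}+2\lambda xy^2$ and $\frac{\partial f_0}{\partial y}=2\lambda x^2y+by^{b-1}$, so the fundamental $x$- and $y$-segments are the segments $(a-1,0)$--$(1,2)$ and $(2,1)$--$(0,b-1)$. The hypothesis that $\Gamma(f_0)$ has two faces says precisely that $(2,2)$ lies strictly below the segment joining $(a,0)$ and $(0,b)$, i.e.\ $(a-2)(b-2)>4$ (that is, $a\ge 4,\ b\ge 5$, or $a=3,\ b\ge 7$); by the Remark following Definition \ref{def:segments} this is exactly the condition that no permissable $x$-segment is parallel to a permissable $y$-segment. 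Using this non-parallelism I would, for each lattice point strictly above $\Gamma(f_0)$, join suitable translates of the two fundamental segments into an infinite permissable chain through that point. Concretely, one can splice an $x$-segment to a $y$-segment and back so that the pattern repeats under a fixed nonzero translation with non-negative entries, producing an unbounded chain; the criterion then places the corresponding monomial in $\Jac(f_0)$.

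The hard part will be this combinatorial step: one must check that the splicing really sweeps out every exponent of $w$-degree greater than $d$ while keeping all segment translates in the non-negative quadrant, and that it does so uniformly across the whole parameter range. The endpoint $a=3$ deserves separate care, since then the fundamental $x$-segment $(2,0)$--$(1,2)$ is primitive (it carries no interior lattice point) and the admissible joins change; this is presumably why the threshold for $b$ is pushed up to $b\ge 7$ in that case. Finally, since non-degeneracy, the Jacobian ideal, permissable chains, and Theorem \ref{thm:quasiNF} are all phrased entirely in terms of exponents and hold over any field of characteristic $0$, the argument is valid verbatim for $\K=\R$ as well as $\K=\C$, which is the claimed extension. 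Once the covering of the region above $\Gamma(f_0)$ is established, the empty-system conclusion $f\sim f_0$ is immediate from Theorem \ref{thm:quasiNF}.
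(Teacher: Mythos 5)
Your overall route is the same one the paper takes: the paper gives no independent argument for Lemma \ref{principalpart}, but cites \citet{A1974}, whose proof is precisely the permissable-chain argument you sketch, and then observes that this proof extends verbatim to $a=3$, $b\ge 7$ and to $\K=\R$. Your identification of the two-face condition with $(a-2)(b-2)>4$, your use of Proposition 9.7 of \citet{A1974}, and your closing remark that the argument is exponent-combinatorial and characteristic-free are all in line with what the paper asserts.

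There is, however, a genuine gap in your reduction step. Theorem \ref{thm:quasiNF} is stated, and proved via Lemma \ref{lemma:quasiNF}, only for a quasihomogeneous part $f_0$ with respect to a \emph{single} weight. When $\Gamma(f_0)$ has two faces, $f_0$ is only piecewise quasihomogeneous, and its partial derivatives are not piecewise homogeneous: for instance, for $a=b=5$ one has $w(T)=\bigl((2,3),(3,2)\bigr)$ and $d(T)=10$, and $\partial f_0/\partial x = 5x^4+2\lambda xy^2$ has terms of piecewise degree $8$ and $7$. Because of this, the degree-by-degree elimination in the proof of Lemma \ref{lemma:quasiNF} does not carry over: a product $v_1\,\partial f_0/\partial x$ chosen to cancel a term of piecewise degree $j$ also contributes in other piecewise degrees, and one must show that these contributions can be cancelled in turn and that the resulting coordinate change has positive filtration, so that the infinite iteration converges. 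That is exactly what the infinite permissable chains supply: the telescoping decomposition along a chain has monomial multipliers of strictly increasing piecewise degree, hence the associated transformation has positive filtration. So the chains are needed for the elimination process itself, not merely for the Jacobian-ideal membership of Proposition 9.7 as in your sketch; invoking Theorem \ref{thm:quasiNF} in the piecewise setting assumes the very statement that has to be proved. This division of labor is visible in the paper's own algorithm: Step (III) uses Theorem \ref{thm:quasiNF} only when $\Gamma(T)$ has one face, and invokes Lemma \ref{principalpart} (i.e.\ Arnold's chain argument) precisely when it has two.
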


\begin{remark}
Note that it follows from Lemma \ref{lemma:quasiNF} and Lemma \ref{principalpart} that all germs with a non-degenerate Newton principal part of modality $1$, corank $2$ are finitely weighted determined. Moreover, we explicitly obtain the weighted determinacy for every such germ.
\end{remark}

 \begin{notation}
 A system of a local algebra is in general not unique. For his lists of normal forms of hypersurface singularities with non-degenerate Newton boundary, Arnold has chosen a specific system for the local algebra in each case. In the rest of the paper, we call these systems the \textbf{Arnold systems}.
 
 We refer to a set of parameters monomials in a normal form as a \textbf{parameter system}. Again for his lists of normal forms, Arnold has chosen in each case a specific parameter system. We refer to these as the \textbf{Arnold parameter systems}.
 
 Note that in the case of a normal form with  non-degenerate Newton boundary, corank $2$ and modality $1$, a system is a parameter system.
 \end{notation}
 We proof the following result in addition to \cite[Lemma 9]{realclassify1} which has similar results for linear factorizations.
 
 \begin{lemma}\label{J10+kfactorization}
Let $n\in\N$, $n\ge 2$. Suppose $f\in\Q[x,y]$ is homogeneous and factorizes as $g_1^n(g_2)$ over $\R$, where $g_1$ is a polynomial of degree $1$ and $g_2$ is a polynomial of degree $2$ that does not have a multiple root over $\C$. Then $f=ag_1'^ng_2'$, where $g_1'$ is a polynomial of degree $1$ over $\Q$, $g_2$ is a polynomial of degree 2 over $\Q$ and $a\in\Q$.
\end{lemma}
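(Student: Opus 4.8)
The plan is to pass to a projective picture and keep track of the multiplicities of the roots of $f$. Since $f\in\Q[x,y]$ is a binary form of degree $n+2$, it splits over $\C$ into linear forms, and its roots in $\Pj^1(\C)$ carry well-defined multiplicities summing to $n+2$. The factorization $f=g_1^ng_2$ over $\R$ records these multiplicities: the root of $g_1$ has multiplicity $n$, while the hypothesis that $g_2$ has no multiple root over $\C$ means its two roots are distinct, each of multiplicity $1$ (should the root of $g_1$ coincide with a root of $g_2$, that point instead has multiplicity $n+1$). The decisive consequence of $n\ge 2$ is that $f$ has a \emph{unique} point $P\in\Pj^1(\C)$ whose multiplicity $\mu$ satisfies $\mu\ge 2$; every other root of $f$ is simple.

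The main step is to show that $P$ is rational. Factor $f$ in the UFD $\Q[x,y]$ as $f=a\prod_j p_j^{e_j}$ with $a\in\Q$ and the $p_j$ pairwise non-associate irreducible forms. As $\Q$ is perfect, each $p_j$ is separable and hence splits over $\C$ into \emph{distinct} linear forms; since distinct irreducibles are coprime, each complex root of $p_j$ is a root of $f$ of multiplicity exactly $e_j$. Let $p_{j_0}$ be the factor vanishing at $P$, so that $e_{j_0}=\mu\ge 2$. Then every root of $p_{j_0}$ is a root of $f$ of multiplicity $\mu\ge 2$, and since $P$ is the only such root, $p_{j_0}$ vanishes only at $P$; being separable, it is therefore linear. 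Hence $P\in\Pj^1(\Q)$ and $g_1':=p_{j_0}$ is a linear form over $\Q$, necessarily proportional over $\R$ to $g_1$.

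It remains to assemble the factorization. We have $f=a\,(g_1')^{\mu}\prod_{j\ne j_0}p_j^{e_j}$, where the remaining factors are simple and of total degree $(n+2)-\mu$. Collecting them (together with one extra copy of $g_1'$ in the case $\mu=n+1$) into a single binary form $g_2'\in\Q[x,y]$ of degree $2$, we obtain $f=a\,(g_1')^{n}g_2'$, as claimed. Equivalently, one may simply observe that $g_1'\in\Q[x,y]$ divides $f$ in $\R[x,y]$ and hence already in $\Q[x,y]$ by the division algorithm, and read off the rational degree-$2$ quotient.

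The crux is the descent carried out in the second paragraph: the real factorization only exhibits $g_1\in\R[x,y]$, and the content of the lemma is that the repeated linear factor is in fact defined over $\Q$. The argument reduces this to the fact that the unique root of $f$ of maximal multiplicity cannot be ``split'' by the passage from $\R$ to $\Q$, because multiplicities are respected by the irreducible factorization over $\Q$. If one prefers a constructive route that avoids factorization, one can use that for a binary form $\gcd(\partial f/\partial x,\partial f/\partial y)=g_1^{\mu-1}$ up to a scalar; greatest common divisors are unchanged under field extension, so this form already lies in $\Q[x,y]$, and since $\mu-1\ge 1$, comparing the coefficients of $g_1^{\mu-1}$ forces the coefficient ratio of $g_1$ to be rational, exhibiting $g_1$ once more as proportional to a form over $\Q$.
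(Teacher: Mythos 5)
Your proof is correct, but it takes a genuinely different route from the paper's. The paper argues computationally: writing $f=(a_1x+a_2y)^n(a_3x^2+a_4xy+a_5y^2)$, it divides by the rational leading coefficient $a_1^na_3$ to make everything monic in $x$, invokes perfectness of $\Q$ to conclude that the squarefree part $(x+\tfrac{a_2}{a_1}y)(x^2+\tfrac{a_4}{a_3}xy+\tfrac{a_5}{a_3}y^2)$ is again rational, and then divides the two rational polynomials to isolate $(x+\tfrac{a_2}{a_1}y)^{n-1}$, from which rationality of the linear factor follows. You instead work with the irreducible factorization of $f$ over $\Q$ and descend the unique point of multiplicity at least $2$: separability of irreducible polynomials over the perfect field $\Q$ forces the irreducible rational factor through that point to be linear. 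Both proofs ultimately rest on the same fact (perfectness of $\Q$), and your auxiliary gcd argument ($\gcd(f_x,f_y)=g_1^{\mu-1}$ up to scalar, followed by coefficient comparison) is the closest in spirit to the paper's division step. What your route buys is robustness: the paper's proof silently assumes $a_1\neq 0$ and $a_3\neq 0$ (it divides by them), and it also silently assumes that the root of $g_1$ is not a root of $g_2$ --- in that degenerate case, which the hypotheses as literally stated do allow, the displayed product is not the squarefree part of $f$, so the perfectness step as written does not apply. Your multiplicity bookkeeping, which explicitly admits $\mu=n+1$ and assembles $g_2'$ accordingly, covers these corner cases cleanly.
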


\begin{proof}
Let $f=(a_1x+a_2y)^n(a_3x^2+a_4xy+a_5y^2)$ with $a_i\in\R$. Since the coefficient of $x^{n+2}$ in $f$ is a rational number, it follows that 
\[(x+\frac{a_2}{a_1}y)^n(x^2+\frac{a_4}{a_3}xy+\frac{a_5}{a_3}y^2)\in\Q[x,y].\]
Since $\Q$ is a perfect field, $(x+\frac{a_2}{a_1}y)(x^2+\frac{a_4}{a_3}xy+\frac{a_5}{a_3}y^2)\in\Q[x,y]$. Hence
$(x+\frac{a_2}{a_1}y)^{n-1}\in\Q[x,y]$ which implies that $x+\frac{a_2}{a_1}y\in\Q[x,y]$. Therefore $f=ag_1'^ng_2'$, where
$g_1'=x+\frac{a_2}{a_1}y$, $g_2'=x^2+\frac{a_4}{a_3}xy+\frac{a_5}{a_3}y^2$ and $a=a_1^na_3$.
\end{proof}

\begin{table}[p]
\centering
\caption{Normal forms of singularities of modality~$1$ and corank~$2$ as given
in \citet{AVG1985}\medskip}
\label{tab:normal_forms}
\begin{minipage}{\textwidth}
\renewcommand{\thempfootnote}{\fnsymbol{mpfootnote}}
\addtocounter{mpfootnote}{1}
\newcommand{\setfnA}{\footnote{\label{fnA}%
Note that the restriction $a^2 \neq 4$ applies to the normal forms of the real
subtypes $X_9^{++}$, $X_9^{--}$, and $J_{10}^+$ as well as to the normal forms
of the complex types $X_9$ and $J_{10}$. 
}}
\newcommand{\reffnA}{\textsuperscript{\ref*{fnA}}}
\centering
\begin{tabular}{|c|c|c|c|c|}
\hline

\multicolumn{1}{|c}{}
 & & Complex     & Normal forms     & \multirow{2}{*}{Restrictions} \\[-0.5ex]
\multicolumn{1}{|c}{}
 & & normal form & of real subtypes &                               \\
\hline\hline

\multirow{6}{*}{\begin{sideways}Parabolic\end{sideways}}

& \multirow{4}{*}{$X_9$} & \multirow{4}{*}{$x^4+ax^2y^2+y^4$}
  & $+x^4+ax^2y^2+y^4$ $(X_9^{++})$ & \multirow{2}{*}{$a^2\neq+4$\setfnA}
\\ \cline{4-4}
&&& $-x^4+ax^2y^2-y^4$ $(X_9^{--})$ &
\\ \cline{4-5}
&&& $+x^4+ax^2y^2-y^4$ $(X_9^{+-})$ & 
\\ \cline{4-4}
&&& $-x^4+ax^2y^2+y^4$ $(X_9^{-+})$ &
\\ \cline{2-5}

& \multirow{2}{*}{$J_{10}$} & \multirow{2}{*}{$x^3+ax^2y^2+xy^4$}
  & $x^3+ax^2y^2+xy^4$ $(J_{10}^+)$ & $a^2 \neq +4$\reffnA \\ \cline{4-5}
&&& $x^3+ax^2y^2-xy^4$ $(J_{10}^-)$ &  
\\ \hline

\multirow{12}{*}{\begin{sideways}Hyperbolic\end{sideways}}

& \multirow{2}{*}{$J_{10+k}$} & \multirow{2}{*}{$x^3+x^2y^2+ay^{6+k}$}
  & $x^3+x^2y^2+ay^{6+k}$ $(J_{10+k}^+)$
      & \multirow{2}{*}{$a \neq 0,\; k > 0$} \\ \cline{4-4}
&&& $x^3-x^2y^2+ay^{6+k}$ $(J_{10+k}^-)$ &   \\ \cline{2-5}

& \multirow{4}{*}{$X_{9+k}$} & \multirow{4}{*}{$x^4+x^2y^2+ay^{4+k}$}
  & $+x^4+x^2y^2+ay^{4+k}$ $(X_{9+k}^{++})$
      & \multirow{4}{*}{$a \neq 0,\; k > 0$}  \\ \cline{4-4}
&&& $-x^4-x^2y^2+ay^{4+k}$ $(X_{9+k}^{--})$ & \\ \cline{4-4}
&&& $+x^4-x^2y^2+ay^{4+k}$ $(X_{9+k}^{+-})$ & \\ \cline{4-4}
&&& $-x^4+x^2y^2+ay^{4+k}$ $(X_{9+k}^{-+})$ & \\ \cline{2-5}

& \multirow{4}{*}{$Y_{r,s}$} & \multirow{4}{*}{$x^2y^2+x^r+ay^s$}
  & $+x^2y^2+x^r+ay^s$ $(Y_{r,s}^{++})$
      & \multirow{4}{*}{$a \neq 0,\; r,s > 4$} \\ \cline{4-4}
&&& $-x^2y^2-x^r+ay^s$ $(Y_{r,s}^{--})$ &      \\ \cline{4-4}
&&& $+x^2y^2-x^r+ay^s$ $(Y_{r,s}^{+-})$ &      \\ \cline{4-4}
&&& $-x^2y^2+x^r+ay^s$ $(Y_{r,s}^{-+})$ &      \\ \cdashline{2-3}\cline{4-5}

& \multirow{2}{*}{$\tY_r$} & \multirow{2}{*}{$(x^2+y^2)^2+ax^r$}
  & $+(x^2+y^2)^2+ax^r$ $(\tY_r^+)$
      & \multirow{2}{*}{$a \neq 0,\; r > 4$} \\ \cline{4-4}
&&& $-(x^2+y^2)^2+ax^r$ $(\tY_r^-)$ &        \\ \hline

\multirow{12}{*}{\begin{sideways}Exceptional\end{sideways}}

& $E_{12}$ & $x^3+y^7+axy^5$ & $x^3+y^7+axy^5$ & - \\ \cline{2-5}

& $E_{13}$ & $x^3+xy^5+ay^8$ & $x^3+xy^5+ay^8$ & - \\ \cline{2-5}

& \multirow{2}{*}{$E_{14}$} & \multirow{2}{*}{$x^3+y^8+axy^6$}
  & $x^3+y^8+axy^6$ $(E_{14}^+)$ & \multirow{2}{*}{-} \\ \cline{4-4}
&&& $x^3-y^8+axy^6$ $(E_{14}^-)$ &                    \\ \cline{2-5}

& $Z_{11}$ & $x^3y+y^5+axy^4$ & $x^3y+y^5+axy^4$ & - \\ \cline{2-5}

& $Z_{12}$ & $x^3y+xy^4+ax^2y^3$ & $x^3y+xy^4+ax^2y^3$ & - \\ \cline{2-5}

& \multirow{2}{*}{$Z_{13}$} & \multirow{2}{*}{$x^3y+y^6+axy^5$}
  & $x^3y+y^6+axy^5$ $(Z_{13}^+)$ & \multirow{2}{*}{-} \\ \cline{4-4}
&&& $x^3y-y^6+axy^5$ $(Z_{13}^-)$ &                    \\ \cline{2-5}

& \multirow{2}{*}{$W_{12}$} & \multirow{2}{*}{$x^4+y^5+ax^2y^3$}
  & $+x^4+y^5+ax^2y^3$ $(W_{12}^+)$ & \multirow{2}{*}{-} \\ \cline{4-4}
&&& $-x^4+y^5+ax^2y^3$ $(W_{12}^-)$ &                    \\ \cline{2-5}

& \multirow{2}{*}{$W_{13}$} & \multirow{2}{*}{$x^4+xy^4+ay^6$}
  & $+x^4+xy^4+ay^6$ $(W_{13}^+)$ & \multirow{2}{*}{-} \\ \cline{4-4}
&&& $-x^4+xy^4+ay^6$ $(W_{13}^-)$ &                    \\ \hline

\end{tabular}
\medskip
\end{minipage}
\end{table}

\section{General Classification Algorithm}\label{section:generalAlg}
In this section we outline the general structure of an algorithm (see Algorithm \ref{alg:ClassPara}) to determine, for a given input polynomial $f\in\Q[x,y]$, the real types as well as, for each type, the corresponding normal form equations to which $f$ is equivalent (see Table \ref{tab:normal_forms}). Each parameter is given as the unique root of its minimal polynomial over $\Q$ in a specified interval. 

Figures \ref{fig exceptional W} to \ref{fig parabolic} show in the gray shaded area all monomials which can possibly occur in a polynomial $f$ of the given type $T$. The Newton polygon $\Gamma(T)$ is shown as the blue line with the non-moduli monomials of $\NF(T)$ shown as blue dots. Red dots indicate monomials which are not in $\Jac(f)$. The dot with the thick black circle indicates the moduli monomial in the Arnold system.

We start the classification process by classifying the input polynomial according to the complex classification in \cite{AVG1985}. We use the \textsc{Singular} library {\tt classify2.lib} \citep{classify2lib} for this purpose. As shown by \cite{A1974}, the complex types correspond to real main types, which split up into real subtypes by modifying the signs of the terms in the normal form of the real main type, except in the case $Y_{r,r}$. This complex type splits up into the real main types $\widetilde Y_r$ and $Y_{r,r}$. The $\widetilde Y_r$ case differs from all the other cases in the sense that these singularities are not right equivalent over the real numbers to a germ with a non-degenerate Newton principal part.\footnote{Thus it does not make sense to include the $\widetilde Y_r$ case in the Figures \ref{fig exceptional W} to \ref{fig parabolic}.} Therefore we postpone the discussion of the case $\widetilde Y_r$ to Section \ref{sec Yr}, where we will give a modification of the general algorithm applicable to this case.

\begin{remark}
We can easily distinguish whether an input polynomial $f$ is of real main type $Y_{r,r}$ or $\widetilde Y_r$, using \cite[Proposition 8]{realclassify1} by considering the number of real roots (i.e., real homogeneous linear factors) of $\jet(f,4)$. The $4$-jet of functions of real type $Y_{r,s}$ has four real roots (counted with multiplicity), while the $4$-jet of functions of real type $\widetilde Y_r$ does
not have any real roots. The \textsc{Singular} library {\tt rootsur.lib} \citep{roots} can be used to determine the number of roots. 

\end{remark}

 In all the other cases, the Newton polygon $\Gamma(T)$ of a real subtype $T$ coincides with the Newton polygon of its corresponding complex and main real type.  
 
\begin{remark}Note that, according to \cite{realclassify2}, normal form equations of different complex types, and hence of different real main types, cannot be equivalent. Normal form equations of types $Y_{r,r}$ and $\widetilde Y_r$ can be complex equivalent, but not real equivalent.
\end{remark}

Suppose $\NF(T)(b)$, $b\in\parm(\NF(T))$, is one of the normal form equations to which $f$ is equivalent. Then there exists an $\R$-algebra automorphism $\phi$ of $\R[[x,y]]$ such that $f=\phi(\NF(T)(b))$.
We write $w=w(T)$ for the piecewise weight induced by $\Gamma(T)$ and let $d_w=d(T)$ be the degree of the monomials on $\Gamma(T)$ with respect to $w$. 

Our first goal is to transform $f$, and thus $\phi$, iteratively by composing $\phi$ with a suitable $\R$-algebra automorphism such that, after every step, we have \[f=\phi(\NF(T)(b))\]and, after step $i$, we have \[\supp(\jet(f,i))=\supp(T,i)\] if $\Gamma(T)$ has one face, and \[\supp(w\dash\jet(\jet(f,i),0)=\supp(T,i)\] if $\Gamma(T)$ has two faces. This implies that, after a finite number of steps, we may assume, by an appropriate choice of $\phi$, that \[\phi_0^w(x)=c_xx\qquad\text{and}\qquad \phi_0^w(y)=c_yy,\] where $w$ is the weight defined by $\Gamma(T)$. After this, in the case where $\Gamma(T)$ has two faces, Theorem  \ref{principalpart} can be used to determine the equivalence class of $f$ by eliminating the monomials above $\Gamma(T)$. 
In the case where $\Gamma(T)$ has only one face, the algorithmic proof of Theorem \ref{thm:quasiNF} can be used to eliminate all monomials above $\Gamma(T)$ which are not in $\supp(T)$, by again iteratively transforming $f$ and $\phi$ such that we have
\[f=\phi(\NF(T)(b))\] after each step, and  \[\supp(w\dash\jet(f,d_w+j_i))=\supp(w\dash\jet(T,d_w+j_i)),\] where $1\leq j_1<\cdots<j_i$, after step $i$. We may thus assume that 
\[\phi_{j_i}^w(x)=c_xx\qquad\text{and}\qquad \phi_{j_i}^w(y)=c_yy,\] where $c_x,c_y\in\R^*$.
Since the determinacy $k$ of $f$ is finite, terms of degree greater than $k$ may be discarded after each step. 
Once $f$ is transformed to a germ with non-degenerate Newton principal part and weighted determinacy bound $k'$, the elimination process above $\Gamma(T)$ can be stopped once $d_w+j_i>k'$. 
Hence a normal form equation of $f$ can be determined in finitely many steps. To summarize, the general classification algorithm involves the following main steps:
\begin{enumerate}[leftmargin=10mm]
\item[I.] Determine  the complex type and the corresponding Newton polygon $\Gamma(T)$. 
\item[II.] Eliminate all monomials in $\supp(f)\setminus\supp(T)$ underneath or on $\Gamma(T)$.
\item[III.] Eliminate all monomials in $\supp(f)\setminus\supp(T)$ above $\Gamma(T)$.
\item[IV.] Read off the real subtypes and the corresponding normal form equations.
\end{enumerate}

\noindent Step (I) is straightforward using \texttt{classify2.lib}. We now discuss Steps (II) to (IV) in detail:

\subsection*{Step (II): Elimination of monomials underneath and on the Newton Polygon}

We first eliminate the monomials in $\jet(f,d)$, which are not in $\jet(T,d)$, where $d$ is the maximum filtration of $f$ with respect to the standard degree. This can be done by a linear transformation over $\R$, since these terms can only be created from the monomials in $\phi_0(x)$ and $\phi_0(y)$. Removing these terms amounts to transforming $\phi$ such that \begin{equation}\label{trans:xn}
\phi_0(x)=c_xx,\ c_x\in\R,
\end{equation}
if $\jet(T,d)=x^n$ for some $n\in\N$, or such that 
\begin{equation}\label{trans:yn}
\phi_0(x)=c_xx,\ c_x\in\R,\text{ and }\phi_0(y)=c_yy,\ c_y\in\R,
\end{equation}
if $\jet(T,d)\neq x^n$ for any $n\in\N$. For the exceptional cases and the parabolic cases $J_{10}$ and $J_{10+k}$, this can be done by factorizing $\jet(f,d)$ over $\Q$ using \cite[Proposition 8]{realclassify1} and \cite[Lemma 9]{realclassify1}. Similarly, can we use Lemma~\ref{J10+kfactorization} for the case $X_{9+k}$. See Algorithms \ref{alg:1jetEx} and~\ref{alg:1jetX9+k}, respectively, which implement the required transformations.

\begin{algorithm}[ht]
\caption{}%
\label{alg:1jetEx}
\begin{algorithmic}[1]
\Require{$f\in\Q[x,y]$ of real type $T$, where $T$ is exceptional, or parabolic of type $J_{10}$ or $J_{10+k}$, with maximum filtration $d$}
\Ensure{$h\in\Q[x,y]$ with $f\sim h$ and $\supp(\jet(h,d))=\supp(T,d)$}
\State $g:=\jet(f,d)$
\State factorize $g$ over $\Q$ as $cf_1^\alpha f_2^\beta$, with $f_1$ and $f_2$ linear and non-associated, $c\in\Q$, and $\alpha>\beta\geq 0$
\If{$\beta>0$}
\State apply $f_1\mapsto x$, $f_2\mapsto y$ to $f$
\ElsIf{$f_1\neq c'y$, $c'\in\Q$}
\State apply $f_1\mapsto x$, $y\mapsto y$ to $f$
\Else  
\State apply $f_1\mapsto x$, $x\mapsto y$ to $f$
\EndIf
\Return $f$
\end{algorithmic}
\end{algorithm}
 
\begin{algorithm}[ht]
\caption{}%
\label{alg:1jetX9+k}
\begin{algorithmic}[1]
\Require{$f\in\Q[x,y]$ of real type $T=X_{9+k}$}
\Ensure{$h\in\Q[x,y]$ with $f\sim h$  and $\supp(\jet(h,4))=\supp(T,4)$}
\State $g:=\jet(f,4)$
\State factorize $g$ over $\Q$ as $cf_1^2f_2$ with $f_1$ linear, $f_2$ quadratic, and $c\in\Q$
\If{$f_1\neq c'y$, $c'\in\Q$}
\State apply $f_1\mapsto x$, $y\mapsto y$ to $f$
\Else
\State apply $f_1\mapsto x$, $x\mapsto y$ to $f$
\EndIf
\State write $f=a_0x^4+a_1x^3y+a_2x^2y^2+R$, $a_0,a_1\in\Q$, $a_2\in\Q^{*}$, and $R\in E_5$
\State apply $y\mapsto y-\frac{a_1}{2a_2}x$, $x\mapsto x$ to $f$
\Return $f$
\end{algorithmic}
\end{algorithm}

In the cases $X_9$ and $Y_{r,s}$, in general, a real field extension is required for this step. This leads to an implementational problem for the subsequent steps of the algorithm, since if we represent an algebraic number field as $\Q[z]/\langle m\rangle$, with $m\in \Q[z]$ irreducible, we have to determine which root of $m$ the generator $\overline{z}$ corresponds to.
We discuss how to handle this problem in the cases $X_9$ and $Y_{r,s}$ in Sections \ref{sec:Parab} and \ref{sec:Hyper}, respectively. 

Next we eliminate the remaining monomials in $\supp(f)\setminus\supp(T)$ underneath or on $\Gamma(T)$.
We consider the cases where $\Gamma(T)$ has exactly one face and where $\Gamma(T)$ has two faces separately. 

\subsubsection*{The Newton polygon has only one face} Note that in the case $X_9$, after the linear transformation, there are no monomials in $\supp(f)\setminus\supp(T)$ underneath or on $\Gamma(T)$. In the remaining cases, we have $\jet(T,d)=m_0$, where $m_0$ is a monomial, and $d$ is the maximum filtration of $f$ with respect to the standard degree as before. Then $\jet(f,d)=cm_0$, with $0\neq c\in\Q$. By considering $\frac{1}{c}f$, we may assume that $c=1$. Indeed, the transformations that remove the required terms of $\frac{1}{c}f$ also remove the same terms of $f$.

We first eliminate the monomials in $f$ strictly underneath $\Gamma(T)$ and then  the monomials on $\Gamma(T)$, which are not in $\supp(T)$. Note that, to achieve this goal, also monomials above the Newton polygon will have to be eliminated.

We inductively consider jets of $f$ of increasing standard degree, starting at degree $d+1$, until all monomials strictly underneath $\Gamma(T)$ have been removed. For each degree $j$, the jet is transformed such that $\supp(\jet(f,j))=\supp(T,j)$.
While $\jet(T,j)=m_0$, all terms of degree $j$ in $f$ have been created through a term in homogeneous non-linear polynomials $v_1$ or $v_2$ in 
\begin{eqnarray*}\phi(x)&=& \text{linear terms}+v_1+\text{terms of higher degree than $v_1$}\\
\phi(y)&=& \text{linear terms}+v_2+\text{terms of higher degree than $v_2$}.
\end{eqnarray*} 
Considering Equations (\ref{trans:xn}) and (\ref{trans:yn}), and taking into account that by $c=1$, we may assume $c_x=1$ and $c_x=c_y=1$, respectively, such a transformation maps $m_0=x^\alpha y^\beta$ in $\NF(T)(b)$ to \[x^\alpha y^\beta+\alpha x^{\alpha -1}y^\beta v_1+\beta x^\alpha y^{\beta -1}v_2+\text{terms of higher degree}.\] Since $m_0$ has degree less than $j$ and is the only term in $\NF(T)(b)$ with degree less than or equal to $j$, the terms of degree $j$ in $f$ can be written as 
\[v_1\frac{\partial m_0}{\partial x}+v_2\frac{\partial m_0}{\partial y},\]
where $\deg(v_1)>1$ and $\deg(v_2)>1$. We can eliminate these terms by the transformation $x\mapsto x-v_1$, $y\mapsto y-v_2$. This transformation possibly creates terms of degree greater than $j$, but does not change $\jet(f,j-1)$.

As soon as the $j$-jet of $\NF(T)(b)$ contains more than one term, the situation becomes more complicated. For example, if $j$ is the degree of a monomial in $\supp(T)$, then some of the monomials of degree $j$ in $\supp(f) \setminus\supp(T) $ may result from the linear terms of $\phi(x)$ and $\phi(y)$. However, all monomials in $f$ underneath $\Gamma(T)$ have already  been eliminated before we reach such a case. Indeed, taking Equations (\ref{trans:xn}) and (\ref{trans:yn}) into account, linear terms in $\phi(x)$ will not create any additional monomials and $\phi(y)$ will only create additional monomials above $\Gamma(T)$. Moreover, a case by case analysis shows that terms in $f$ underneath $\Gamma(T)$ also cannot result from higher order terms in $\phi(x)$ and $\phi(y)$:
\begin{itemize}[leftmargin=7mm]

\item If $m_0=x^d$, then the real main type of $f$ is one of the following: $W_{12}$, $W_{13}$, $E_{12}$, $E_{13}$, $E_{14}$, $J_{10}$ (see Figures  \ref{fig exceptional W}, \ref{fig exceptional E}, and \ref{fig parabolic}). 

Considering the normal forms of $W_{12}$ and $W_{13}$, we have $\jet(T,4)=x^4$ and $\jet(T,5)\neq x^4$. Suppose $\jet(f,4)=x^4$. If $f$ is of type $W_{12}$, then $f$ cannot have any monomials underneath $\Gamma(T)$. Let $f$ be of type $W_{13}$ and suppose $y^5\in\supp(f)$. Then it follows from Theorem \ref{thm:quasiNF} that $f$ is of type $W_{12}$ which leads to a contradiction. Hence $y^5\not\in\supp(f)$. 

If $f$ is of type $E_{12}$ or $E_{13}$, then $\jet(T,5)=x^3$ and $\jet(T,6)\neq x^3$. 
After applying the algorithm described above, we may assume that $\jet(f,5)=x^3$. If $y^6\in\supp(f)$, then $f$ is of type $J_{10}$. If $y^7\in\supp(f)$, then $f$ is not of type $E_{13}$, but rather of type $E_{12}$. Hence if $f$ is of type $E_{12}$ or $E_{13}$, then $f$ has no monomials underneath $\Gamma(T)$. Similarly, it can be shown that if $f$ is of type $E_{14}$, then $f$ has no monomials underneath $\Gamma(T)$ after $f$ has been transformed such that $\jet(f,6)=x^3$. 

If $f$ has real main type $J_{10}$ and $\jet(f,3)=x^3$, the only monomials that may occur underneath $\Gamma(T)$ are $xy^3$, $y^4$ and $y^5$. Suppose that $xy^3\in\supp(f)$. Since, by Equation (\ref{trans:xn}), this term cannot result from linear terms of $\phi(x)$ or $\phi(y)$, and since $\jet(f,3)=x^3$, it follows that $xy^3\in\supp(v_1\frac{\partial x^3}{\partial x})=\supp(3v_1x^2)$. Since $x^2\nmid xy^3$, this leads to a contradiction. If $y^4\in\supp(f)$, then $f$ is of real main type $E_6$ by Theorem \ref{thm:quasiNF}, which again gives a contradiction. Similarly, if $y^5\in\supp(f)$, then $f$ is of real main type $E_8$.

\item If $m_0\neq x^d$, then the real main type of $f$ is one of the following: $Z_{11}$, $Z_{12}$, $Z_{13}$ (see Figure~\ref{fig exceptional Z}).

 We show that, in these cases, there are no monomials underneath $\Gamma(T)$ after the linear transformation that transforms $f$ such that $\jet(f,d)=m_0$. Suppose $f$ is of one of the main types under consideration and that $\jet(f,d)=m_0$. If $f$ is of type $Z_{11}$, there are no  monomials of degree greater than $d$ underneath $\Gamma(T)$. Suppose $f$ is of real main type $Z_{12}$ or $Z_{13}$. If $y^5$ or $xy^4$ are elements of $\supp(f)$, then, by Theorem \ref{thm:quasiNF}, $f$ is of real main type $W_{12}$ or $Z_{12}$, respectively, which leads to a contradiction.

\end{itemize}

Therefore, if $m_0\neq x^d$, then after the linear transformation given in Algorithm \ref{alg:1jetEx}, $f$ has no monomials underneath $\Gamma(T)$.  If $m_0=x^d$ and $j$ is such that $\jet(T,j)=m_0$, then one can write
\[\jet(f,j)=m_0+v_1\frac{\partial m_0}{\partial x}+0\frac{\partial m_0}{\partial y}.\]
Hence, iterative application of Algorithm~\ref{alg:Transformation} with $t$ a term of $f$ of standard degree $j$ removes all monomials underneath $\Gamma(T)$. Note that in the algorithm we have $m_x=m_0$, so we make only use of the case in lines \ref{line:OneFace} - \ref{line:OneFaceEnd}. The other case will be used later to eliminate monomials above the diagonal in a similar manner. Moreover, note that Algorithm \ref{alg:Transformation}, as formulated, also works for the polynomial $t=\jet(f,j)-\jet(f,j-1)$.

After eliminating all the monomials in $\supp(f)$ strictly underneath $\Gamma(T)$, we may assume that $\phi_{-1}^w(x)=0$ and $\phi_{-1}^w(y)=0$. 

Next we eliminate the remaining monomials in $\supp(f)\setminus\supp(T)$ on $\Gamma(T)$, see line \ref{line:weighteJet} in Algorithm \ref{alg:ClassPara}. The only case where such monomials can occur is the $J_{10}$ case. In this case, these monomials can theoretically be eliminated by a weighted homogeneous transformation. However, similar to the linear transformation discussed above, this transformation creates implementational difficulties, since it may require real field extensions. We will discuss how to handle this problem in Section \ref{sec:Hyper}.
 
\subsubsection*{The Newton polygon has two faces} In this case, all the normal forms we have to consider are of the form $x^n+x^2y^2+ay^m$ with $m,n\in\N$, see Figure \ref{fig hyperbolic}.

 After a linear transformation (see Algorithm \ref{alg:1jetEx} for the case $J_{10+k}$, Algorithm \ref{alg:1jetX9+k} for the case $X_{9+k}$, and Section \ref{sec:Hyper} for the case $Y_{r,s}$), we may assume that $\supp(\jet(f,d))=\supp(T,d)$, where $d$ is the maximum filtration of $f$ with respect to the standard grading. We first remove all monomials $m$ with $$\displaystyle\max\left\{w_i\dash\deg(m)\mid i\right\}\le d_w$$ which are not in $\supp(T)$, see line \ref{line:TwoFacesEnd} in Algorithm \ref{alg:ClassPara}. The only cases in which such monomials may occur are the $J_{10+k}$ cases. The monomials $y^4$, $y^5$ and $xy^3$ do not occur in $f$ if it is of type $J_{10+k}$, $k>0$, since presence of either of them leads to a different type. Using Algorithm \ref{alg J10k}, we transform $f$ by a weighted linear transformation such that $\supp(w\dash \jet(f,6))=\supp(w\dash\jet(T,6))$ where $w=(2,1)$, which means that $f$ is in the desired form.

   \begin{algorithm}[ht]
\caption{}%
\label{alg J10k}
\begin{algorithmic}[1]
\Require{$f$ of type $J_{10+k}$ with $\supp((2,1)\dash \jet(f,5))=\emptyset$}
\Ensure{$h\in\Q[x,y]$ with $f\sim h$  and with $\supp((2,1)\dash \jet(h,6))=\supp((2,1)\dash\jet(T,6))$}
 \State write $f$ as $f = x^3+cx^2y^2+dxy^4+ey^6+R$ with $c,d,e\in\Q$ and $R\in E_7^{(2,1)}$
\State let $q$ be the double root of $x^3+cx^2+dx+e$
\State apply $x\mapsto x+qy^2$, $y\mapsto y$ to $f$
\Return $f$
\end{algorithmic}
\end{algorithm}

Similar to the case where $\Gamma(T)$ has one face, we may assume that $\coeff(f,x^2y^2)=1$ by considering $\frac{1}{c}f$, where $c=\coeff(f,x^2y^2)$. We now, again, consider jets of $f$ of increasing standard degree, starting at degree $d+1$. However, in this case, we only eliminate those monomials not lying in $\supp(T)$ that are strictly underneath or on $\Gamma(T)$. Suppose we are considering the standard weighted $j$-jet. We first eliminate terms of the form $c_1xy^{j-1}$ and $c_2yx^{j-1}$ which lie underneath or on $\Gamma(T)$, using permissible $x$- and $y$-segments, that is, by applying the transformation \[x\mapsto x+ \frac{c_1}{2} y^{j-3},\quad y\mapsto y\]
 or
 \[x\mapsto x,\quad y\mapsto y+ \frac{c_2}{2}x^{j-3},\]
 respectively. This process is described in Algorithm \ref{alg:Transformation} with $t=c_1xy^{j-1}$ or $t=c_2yx^{j-1}$, respectively.
   Now, if $n,m\neq j$, the monomials $x^j$ and $y^j$ do not occur in $\supp(f)$: Otherwise, by Lemma \ref{principalpart} and Theorem \ref{thm:quasiNF}, $f$ would be of a different type than assumed. 
   No other monomials can occur in $\supp(f)\setminus\supp(T)$ underneath or on $\Gamma(T)$.

\begin{algorithm}[ht]
\caption{}%
\label{alg:Transformation}
\begin{algorithmic}[1]
\Require{$f\in\K [x,y]$ of corank $2$, modality $1$ and type $T$, and a term $t\notin \supp(T)$ of $f$ such that either\smallskip
\begin{enumerate}[leftmargin=7mm]
\item[(a)]$t\in \Jac(f)$, $|\supp(T,j)|=1$, $\supp(\jet(f,j-1))=\supp(T,j-1)$ where $j=\deg(t)$,
\item[(b)]$t$ is in a permissible chain, and $\supp(\jet(f,d))=\supp(T,d)$ where $d$ is the maximum filtration of $f$, or
\item[(c)]
$t\in \Jac(f)$, $j:=w(T)\dash\deg(t)>d(T)$, $\supp(w(T)\dash\jet(f,j-1))=\supp(w(T)\dash\jet(T,j-1))$, and $j$ is smaller than the $w(T)$-degree of the unique element in Arnold's system.
\end{enumerate}

}
\Ensure{$g\in\K [x,y]$ such that $f\sim g$ and $\supp(g)\cap\supp(t)=\emptyset$ 
}
\State let $f_0$ be piecewise quasihomogeneous part of $f$ of degree $d(T)$ w.r.t.~$w(T)$
\If{$t\in\Jac(f)$}
\State $(u_1,u_2):=((1,1),(1,1))$
\Else
\State $(u_1,u_2):=w(T)$
\EndIf
\State let $m_x$ be the term of $\frac{\partial f_0}{\partial x}$ of lowest $u_2$-degree 
\State let $m_y$ be the term of $\frac{\partial f_0}{\partial y}$ of lowest $u_1$-degree
\If{$m_x|t$}\label{line:OneFace}
\State \vspace{-4mm}\label{line:OneFaceEnd}\begin{eqnarray*}
\alpha:\K[x,y]&\rightarrow&\K[x,y]\\
       x&\mapsto& x-t/m_x\\
       y&\mapsto& y
\end{eqnarray*}
\Else 
\State \vspace{-4mm}\begin{eqnarray*}
\alpha:\K[x,y]&\rightarrow&\K[x,y]\\
       x&\mapsto& x\\
       y&\mapsto&y-t/m_y
 \end{eqnarray*}
\EndIf
\State $g:=\alpha(f)$
\Return $g$
\end{algorithmic}
\end{algorithm}

\subsection*{Step (III): Elimination of monomials above the Newton polygon} In the case where $\Gamma(T)$ has two faces, Theorem \ref{principalpart} allows us to delete all monomials above $\Gamma(T)$.

 We now consider the case where $\Gamma(T)$ has one face. Here we use the algorithmic method that was use in the proof of Theorem \ref{thm:quasiNF} to eliminate the monomials above $\Gamma(T)$ in $\Jac(f)$. 
Let $\{m_1\}$ be the Arnold system for the singularity under consideration. Let $d'_w$ be the degree of $m_1$. Again we iteratively consider the $w$-degree $j$ part of $f$ in $\Jac(f)$, for increasing $j$  with $d_w < j\leq d'_w$. This polynomial can be written as
\[\frac{\partial f_0}{\partial x}v_1+\frac{\partial f_0}{\partial y}v_2,\]
where $f_0$ is the quasihomogeneous part of $f$ and $v_1,v_2\in\R[x,y]$. After applying the transformation $x\mapsto x-v_1$, $y\mapsto y-v_2$ to $f$, the $w$-degree $j$ part has no terms of degree $j$ in $\Jac(f)$. Note that for $d_w< j<d'_w$, all terms of $w$-degree $j$ are in $\Jac(f)$.  In all the cases we consider, each weighted diagonal above $\Gamma(T)$ of $w$-degree less than or equal to $ d'_w$ will only contain one monomial in $\Jac(f)$. Hence, either $v_1$ or $v_2$ are zero. We therefore can apply Algorithm \ref{alg:Transformation} to remove this monomial.

The remaining $w$-degree $d'_w$ part of $f$ can be written as
\[\frac{\partial f_0}{\partial x}v_1+\frac{\partial f_0}{\partial y}v_2+cm_1,\]
where $v_1,v_2\in\R[x,y]$ and $c\in\R$. Applying the transformation $x\mapsto x-v_1$, $y\mapsto y-v_2$ results in transforming the part of $f$ of weighted degree $d'_w$ to $cm_1$. A case by case analysis shows that we can always choose $v_2=0$. See line \ref{line:write in Arnold system} in Algorithm \ref{alg:ClassPara}.

Since all terms of $w$-degree greater than $d'_w$ are above the weighted determinacy, they can be deleted.

\begin{remark}
The question may be asked why we change the strategy, in the sense that we iteratively consider the terms of increasing standard degree in the process of removing all terms underneath $\Gamma(T)$ and the terms of increasing $w$-degree above $\Gamma(T)$. The reason is that canceling terms underneath $\Gamma(T)$ can only be done using transformations of negative filtration with respect to $w$, since the terms underneath $\Gamma(T)$ have smaller $w$-degree than the terms on $\Gamma(T)$. This means that in the process of deleting monomials of a given $w$-degree, monomials of lower and higher $w$-degree may be created. Since the filtration of a transformation with respect to the standard degree cannot be negative, this is not the case when canceling terms by increasing standard degree via the above methods. However, terms above $\Gamma(T)$ are canceled using transformations with non-negative filtration with respect to $w$ via the above method and therefore we consider terms by increasing $w$-degree in this case, which leads to a much simpler process.
\end{remark}

\begin{algorithm}[p]
\caption{Classification and Determination of Parameter: General Structure}%
\label{alg:ClassPara}
\begin{algorithmic}[1]

\Require{$f \in \m^3\subset\Q[x,y]$, a germ of modality $1$ and corank $2$}

\Ensure{the real singularity types and normal forms of $f$, as well as the corresponding normal form equations in the right equivalence class of $f$ with the respective parameter given as the unique root of its minimal polynomial over $\Q$ in a specified interval}
\vspace{0.2cm}

\noindent\emph{I. Determine  the complex type:} 
\State compute the complex type $T$ of $f$ using {\tt classify()} from \citep{classify2}
\vspace{0.2cm}

\noindent\emph{II. Eliminate the monomials in $\supp(f)$ underneath or on $\Gamma(T)$:}
\State let $d$ be the maximum filtration of $f$ w.r.t.~the standard grading

\State\label{line:linearCoordinateChange}apply a linear coordinate change to $f$ such that $\supp(\jet(f,d))=\supp(T,d)$ 
\State $w:=(w_1,\ldots,w_n):=w(T)$ 
\State $d_w:= d(T)$
\If{$n=1$}
	\For{$j=d+1,d+2,\ldots$ \textbf{while} $\supp(f)\cap(\R^2\setminus\Gamma_+(T))\neq\emptyset$}\label{line:belowDiagonalStart}
			\While{$f$ has a term $t$ of standard degree $j$}
					\State replace $f$ by the output of Algorithm \ref{alg:Transformation} applied to $f$ and $t$ 			\EndWhile
	\EndFor\label{line:belowDiagonalEnd}			

	\State \label{line:weighteJet}apply a weighted homogeneous transformation to $f$ such that\vspace{-1mm} \[\supp(w\dash\jet(f,d_w))=\supp(w\dash\jet(T,d_w))\]

\Else
      \State \label{line:TwoFacesEnd}apply a $w_2$-weighted homogeneous transformation to $f$ such that\vspace{-1mm} \[\supp(w_2\dash\jet(f,d_w))=\supp(w_2\dash\jet(T,d_w))\]
	\For{$j= d+1, d+2,\ldots$ \textbf{while} $\supp(f)\cap(\overline{\R^2\setminus\Gamma_+(T)})\neq\emptyset$}\label{line:n2forloop}
	
				\While{$f$ has a term $t$ of the form $bxy^{j-1}$ or $byx^{j-1}$ underneath or on $\Gamma(T)$}\label{line:TwoFacesStart}
							\State \label{line:TwoFacesEnd0}replace $f$ by the output of Algorithm~\ref{alg:Transformation} applied to $f$ and $t$
			\EndWhile

	\EndFor
\EndIf
\vspace{0.2cm}

\noindent\emph{III. Eliminate the monomials above $\Gamma(T)$ which are not in $\supp(T)$:}
\If{$n=1$}
    \If{$w\dash\jet(\Gamma(T),d_w)$ is non-degenerate}
	\State let $d'_w$ be the $w$-degree of the unique element in Arnold's system 
	\For{$j=d_w+1,\ldots, d'_w$}
	\While{$f$ has a term $t\in\Jac(g)$ of $w$-degree $j$}
		\State replace $f$ by the output of Algorithm~\ref{alg:Transformation} applied to $f$ and $t$
	\EndWhile 
	\EndFor
	\State $f:=w\dash\jet(f,d'_w)$
	\State modulo $\Jac(f)$, write the sum of the terms of $f$ above $\Gamma(T)$ in Arnold's system\label{line:write in Arnold system}
    \Else
      \State replace $f$ by the output of Algorithm~
              \ref{alg:Yrabove} applied to $f$
    \EndIf
\Else 
	\State $f:=w\dash\jet(f,d_w)$
\EndIf
\vspace{0.2cm}

\noindent\emph{IV. Scale and read off the desired information:}
\State apply a transformation of the form $x\mapsto\pm x$, $y\mapsto\pm y$ to $f$ such that the signs of the coefficients of the non-moduli monomials match
   the signs of one possible real normal form $F$ in Table~\ref{tab:normal_forms}
\State read off the corresponding type of $F$
\State\label{line:scaling}apply a transformation of the form $x\mapsto \lambda_1 x$, $y\mapsto \lambda_2 y$ with $\lambda_1,\lambda_2> 0$ in a real algebraic extension of $\Q$ to transform the non-moduli terms to the terms of $F$
\State determine the minimal polynomial over $\mathbb{Q}$ for the parameter in $F$
\State using \cite{realclassify2}, determine all possible types and corresponding parameters
\Return all types, normal forms, and parameters
\end{algorithmic}
\end{algorithm}

\subsection*{Step (IV): Scaling and reading off the desired information.} 
Lastly we scale $f$ such that the coefficients of the non-moduli terms coincide with those in one of the real subtypes of the real main type of $f$, read off the needed information and determine all normal form equations in the equivalence class of $f$, using \cite{realclassify2}.

\section{Implementing the Exceptional Cases}\label{sec:Excep}

For the exceptional singularities (see Figures \ref {fig exceptional W}, \ref{fig exceptional E}, and \ref{fig exceptional Z}), the general algorithm can be implemented in a straightforward manner. As an example, we consider singularities of real main type $E_{14}$, and give all details in Algorithm \ref{alg:E_14}. In this case, we have $w=(8,3)$, $d=3$, $d_w=24$, $d'_w=26$, $m_0=x^3$, $m_1=xy^6$ and $f_0=x^3+y^8$ in the general algorithm. Algorithm \ref{alg:E_14} corresponds directly to our implementation of the respective case in \textsc{Singular}. Note that, although the scaling step in line \ref{line:scaling} of Algorithm \ref{alg:ClassPara} has not been implemented explicitly, it has been taken into account in Algorithm \ref{alg:E_14} when determining the minimal polynomial. We use the following notation.

\begin{notation}
Suppose $p\in\Q[z]$ is a univariate polynomial over the rational numbers, and $I\subset\R$ is an interval such that there is exactly one $a \in I$ with $p(a)=0$. We then denote the monic irreducible divisor of $p$ with root $a$ by $m_I(p)$.  
\end{notation}
When denoting real subcases, we identify $+$ with $+1$ and $-$ with $-1$. For example, we use the notation $E_{14}^{\sign(b)}$ for $E_{14}^{+}$ if $b>0$, and $E_{14}^{-}$ if $b<0$.\bigskip

\begin{figure}[ht]
\begin{center}
\setlength{\tabcolsep}{3mm}
\begin{tabular}
[c]{ccc}%
{\includegraphics[
height=1.86in,
]%
{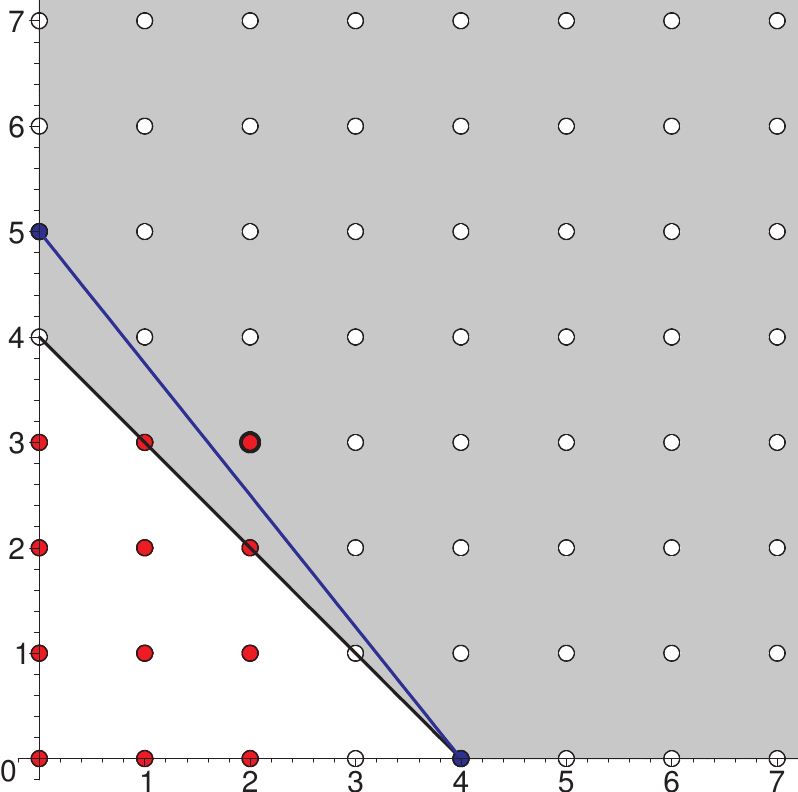}%
}%
& {\includegraphics[
height=1.86in,
]%
{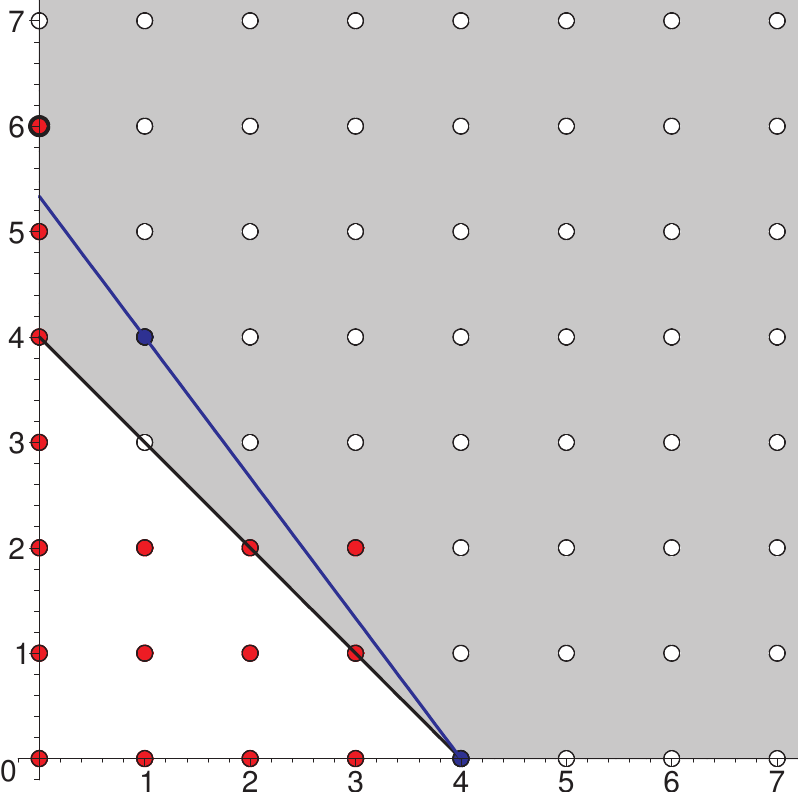}%
}\\
$W_{12}$ & $W_{13}$ &
\end{tabular}
\caption{Exceptional Singularities of type W}%
\label{fig exceptional W}%
\end{center}
\end{figure}

\begin{figure}[ht]
\begin{center}
\setlength{\tabcolsep}{3mm}
\begin{tabular}
[c]{ccc}%
{\includegraphics[
height=2.1in,
]%
{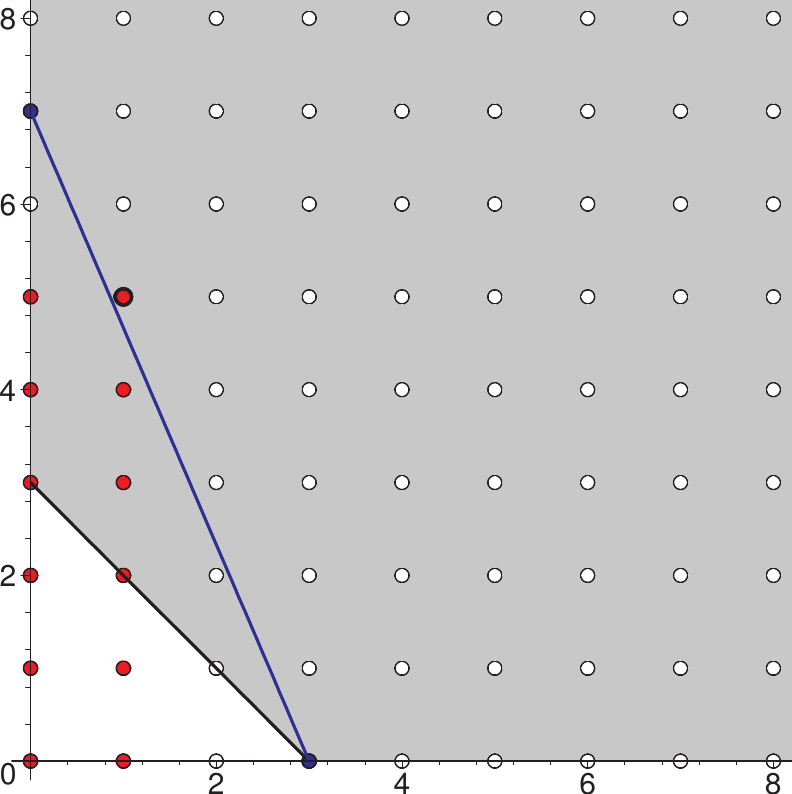}%
}%
&
{\includegraphics[
height=2.1in,
]%
{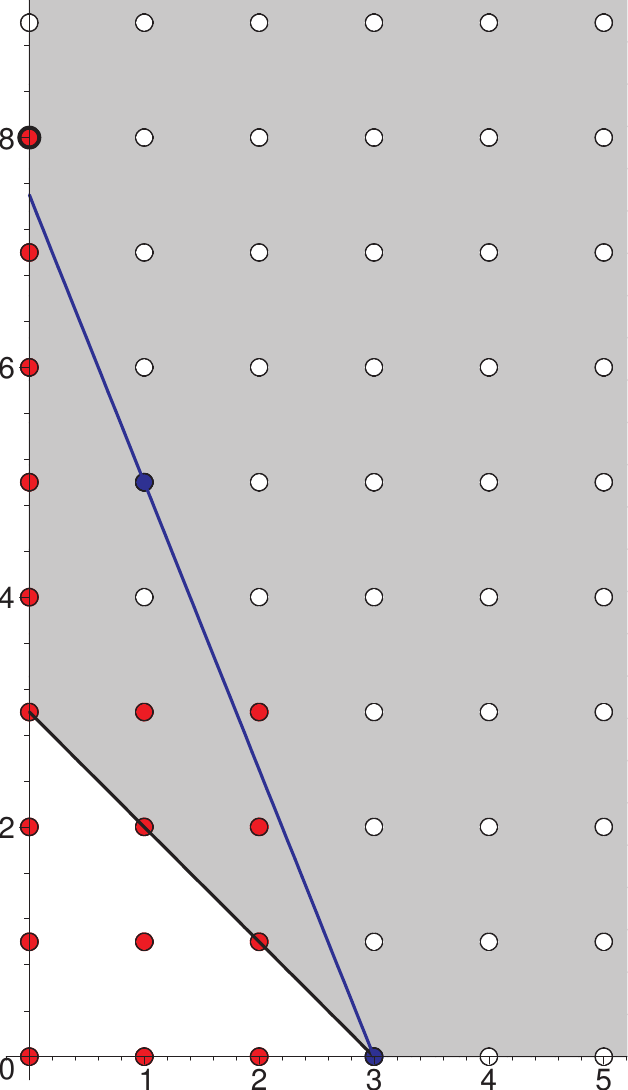}%
}%
&
{\includegraphics[
height=2.1in,
]%
{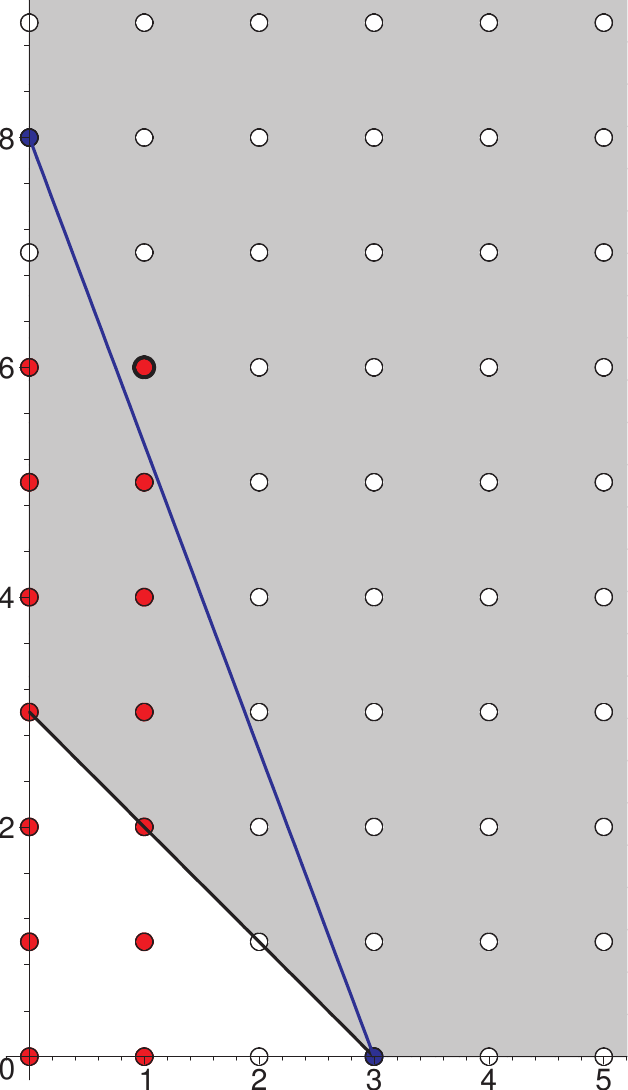}%
}%
\\
$E_{12}$ & $E_{13}$ & $E_{14}$
\end{tabular}
\caption{Exceptional Singularities of type E}%
\label{fig exceptional E}%
\end{center}
\end{figure}

\begin{algorithm}[ht]
\caption{Algorithm for the case $E_{14}$}%
\label{alg:E_14}
\begin{algorithmic}[1]

\Require{$f \in \m^3\subset\Q[x,y]$ of complex singularity type $T=E_{14}$}

\Ensure{
all normal form equations in the right equivalence class of $f$, each specified as a tuple of the real singularity type, the normal form, the minimal polynomial of the parameter, and an interval such that the parameter is the unique root of the minimal polynomial in this interval}
\vspace{1mm}

\noindent\textit{II. Eliminate the monomials in $\supp(f)$ underneath or on $\Gamma(T)$:}
\vspace{0.2cm}

\noindent \textit{Apply Algorithm \ref{alg:1jetEx}:}
\vspace{0.2cm}

\State $h := \jet(f,3)$
\State factorize $h$ as $ch_1^3$, where $h_1$ is homogeneous of degree one and $c\in\Q^*$
\If{$h\neq c'y, c'\in\Q$}
\State apply $h_1\mapsto x$, $y\mapsto y$ to $f$
\Else
\State apply $h_1\mapsto x$, $x\mapsto y$ to $f$
\EndIf
\State $g:=\frac{1}{c}f$

\vspace{0.2cm}
\noindent\textit{Consider the terms of $g$ of standard degree $j=4$ and apply Algorithm \ref{alg:Transformation}:}
\vspace{0.2cm}

\State $h_1:= \frac{\jet(g,4)-x^3}{3x^2}$
\State apply $x\mapsto x-h_1$, $y\mapsto y$ to $g$

\vspace{0.2cm}
\noindent\textit{Consider the terms of $g$ of standard degree $j=5$ and apply Algorithm \ref{alg:Transformation}:}
\vspace{0.2cm}

\State $h_2:=\frac{\jet(g,5)-x^3}{3x^2}$
\State apply $x\mapsto x-h_2$, $y\mapsto y$ to $g$

\vspace{0.2cm}
\noindent\textit{III. Eliminate the monomials above $\Gamma(T)$ which are not in $\supp(T)$:}
\vspace{0.2cm}
\State $w:=(8,3)$

\vspace{0.2cm}
\noindent\textit{Consider the terms of $g$ of $w$-degree $j=25$ and apply Algorithm \ref{alg:Transformation}:}
\vspace{0.2cm}

\State $h'_1:=\frac{w\dash\jet(g,25)-w\dash\jet(g,24)}{3x^2}$
\State $x\mapsto x- h'_1$, $y\mapsto y$
\vspace{0.2cm}

\State $f:=cg$\vspace{2mm}

\noindent\textit{IV. Read off the desired information:}
\vspace{0.2cm}

\If{$c<0$}
\State apply $x\mapsto -x$, $y\mapsto y$ to $f$
\EndIf
\State write $f$ as $f = cx^3+by^8+R$\quad with $c\in\Q_{>0}$, $b\in\Q^{*}$,\ and $R\in E_{25}^{(8,3)}$
\State $t:= \operatorname{coeff}(f,xy^6)$
\State $p:= z^{12}-c^{-4}|b|^{-9}t^{12}\in\mathbb{Q}[z]$

\If{$t>0$}
\Return$ (E_{14}^{\operatorname{sign}(b)},\ x^3+\operatorname{sign}(b)\cdot y^8+axy^6,\  \operatorname{m}_{(0, \infty)}(p),\ (0, \infty) )$
\Else
\Return$ (E_{14}^{\operatorname{sign}(b)},\ x^3+\operatorname{sign}(b)\cdot y^8+axy^6,\  \operatorname{m}_{(-\infty,0]}(p),\ (-\infty,0] )$
\EndIf
\end{algorithmic}
\end{algorithm}

\begin{figure}[t]
\begin{center}
\setlength{\tabcolsep}{3mm}
\begin{tabular}
[c]{ccc}%
{\includegraphics[
height=1.6336in,
]%
{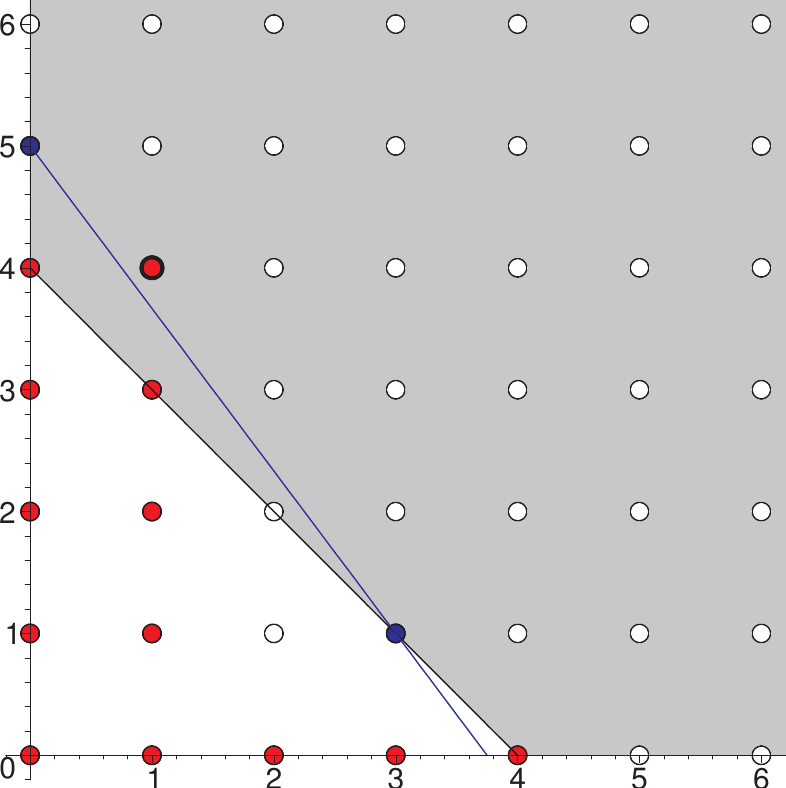}%
}%
&
{\includegraphics[
height=1.6336in,
]%
{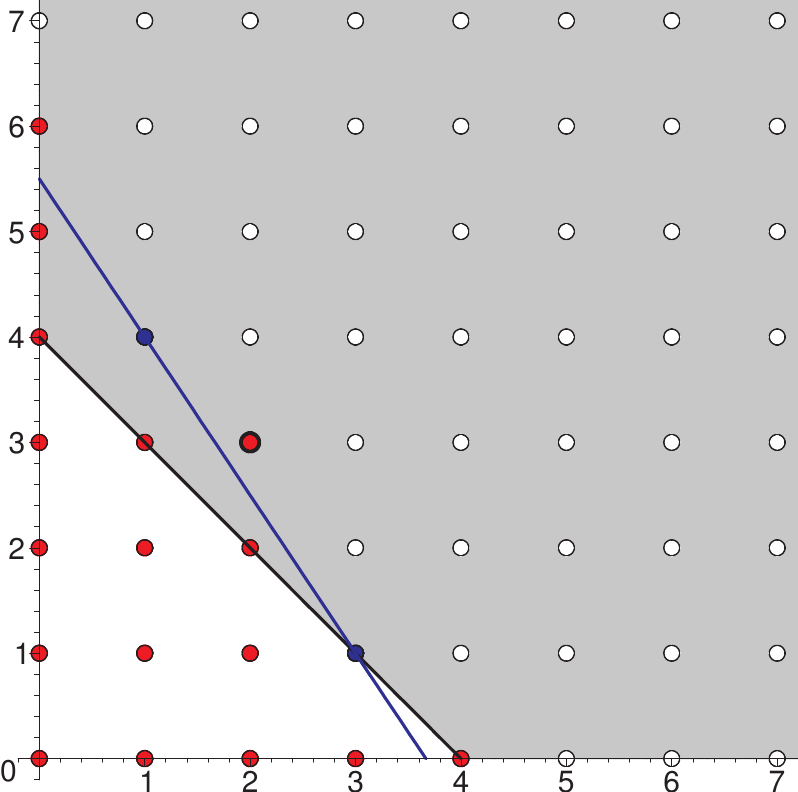}%
}%
&
{\includegraphics[
height=1.6336in,
]%
{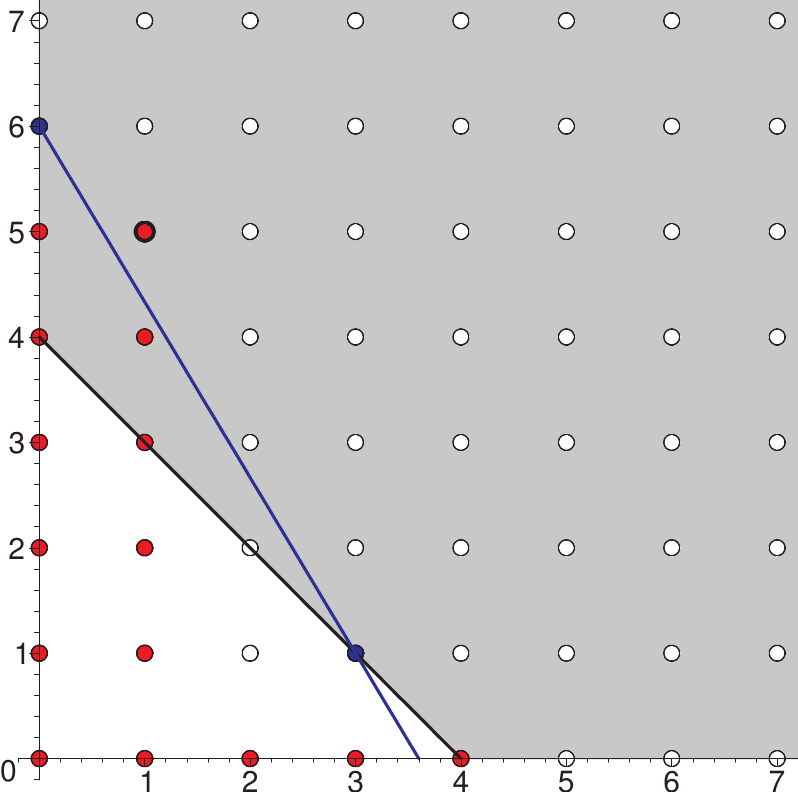}%
}%
\\
$Z_{11}$ & $Z_{12}$ & $Z_{13}$
\end{tabular}
\caption{Exceptional Singularities of type Z}%
\label{fig exceptional Z}%
\end{center}
\end{figure}

\phantom{XXXXXXXXXXXXXXXXXXXXXXXXXXXXXXXXXX}

\phantom{XXXXXXXXXXXXXXXXXXXXXXXXXXXXXXXXXXXXXXXX}

\phantom{XXXXXXXXXXXXXXXXXXXXXXXXXXXXXXXXXXXXXX}

\phantom{XXXXXXXXXXXXXXXXXXXXXXXXXXXXXXXXXXXXX}

\section{Parabolic Singularities}\label{sec:Parab}

In this section, we give details on the application of the general algorithm (Algorithm \ref{alg:ClassPara}) for the parabolic singularities. See Figure \ref{fig parabolic} for these cases.

\begin{figure}[b]
\begin{center}
\setlength{\tabcolsep}{3mm}
\begin{tabular}
[c]{ccc}%
{\includegraphics[
height=2.4in,
]%
{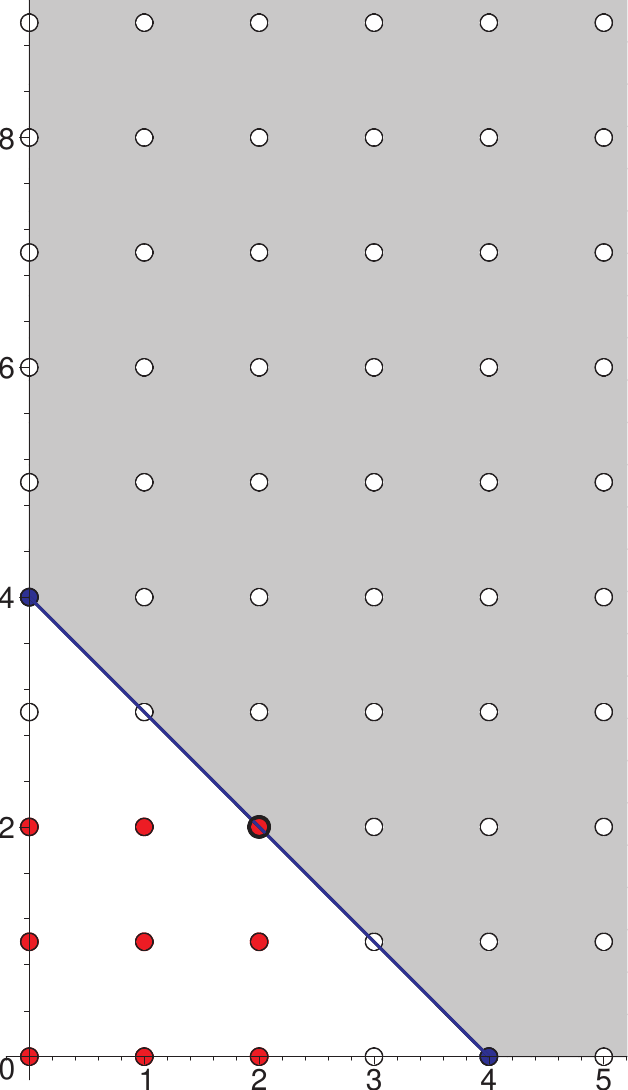}%
}%
&
{\includegraphics[
height=2.4in,
]%
{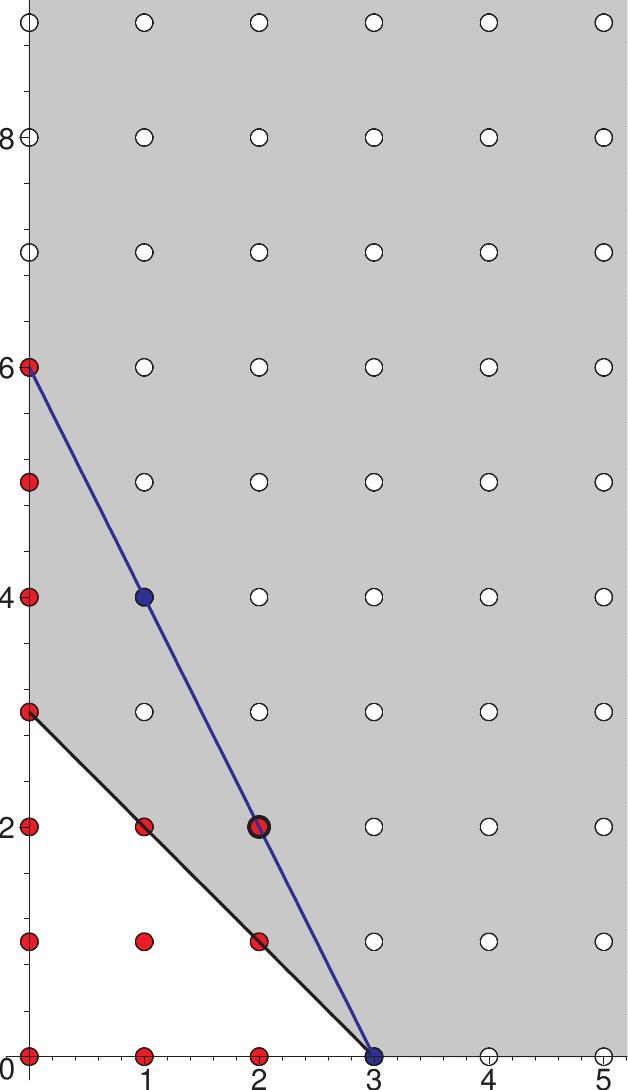}%
}%
\\
$X_{9}$ & $J_{10}$ 
\end{tabular}
\caption{Parabolic Singularities}%
\label{fig parabolic}%
\end{center}
\end{figure}

\subsection{The \texorpdfstring{$\boldsymbol{J_{10}}$}{J10} case}
In this case we have $w=(6,3)$ and $d_w=d'_w=18$. As discussed in Section~\ref{section:generalAlg}, lines \ref{line:belowDiagonalStart} - \ref{line:belowDiagonalEnd} in the general algorithm are redundant. Moreover, by $d_w=d'_w$, Step (III) is redundant. Unfortunately we may need a real field extension in line \ref{line:weighteJet} of the general algorithm. This leads to an implementational problem for the subsequent steps of the algorithm, since if we represent an algebraic number field as $\Q[z]/m$, with $m$ irreducible, we have to determine which root of $m$ the generator $\overline{z}$ corresponds to. In this section, we will work around this problem by presenting a method to read off the required information without explicitly doing the required weighted linear transformation, see Algorithm \ref{alg:J_{10}}.

In \cite{realclassify2} it has been shown that in the case of real main type $J_{10}$, the equivalence class of $f$ either contains exactly one normal form equation of type $J_{10}^{+}$ or it contains exactly three normal form equations, two of which are of type $J_{10}^{+}$ and one is of type $J_{10}^{-}$. This shows, in particular, that the real subtype $J_{10}^{-}$ is redundant. The following algorithmic approach to the classification also reconfirms these findings.

\begin{algorithm}[htp]
\caption{Algorithm for the case $J_{10}$}%
\label{alg:J_{10}}
\begin{algorithmic}[1]

\Require{$f \in \m^3\subset\Q[x,y]$ of complex singularity type $J_{10}$}

\Ensure{
all normal form equations in the right equivalence class of $f$, each specified as a tuple of the real singularity type, the normal form, the minimal polynomial of the parameter, and an interval such that the parameter is the unique root of the minimal polynomial in this interval
}
 
\State $f:=\jet(f,6)$
\If{$\operatorname{coeff}(f,x^3)=0$}
\State apply $x\mapsto y$, $y\mapsto x$ to $f$
\EndIf
\State $h:=\jet(f,3)$
\State factorize $h$ as $bg_1^3$\ with $b\in\Q$, $b>0$, and $g_1$ homogeneous of degree $1$
\State apply $x\mapsto g_1$, $y\mapsto y$ to $f$
\State $f:= \frac{1}{\operatorname{coeff}(f,x^3)}f$
\State write $f$ as $f = x^3+ax^2y^2+dxy^4+ey^6$ with $a,d,e\in\Q$
\State apply $x\mapsto x-\frac{a}{3}y^2$, $y\mapsto y$ to $f$
\State write $f$ as $f=x^3+dxy^4+ey^6$ with  $d,e\in\Q$
\State $p^{(\sigma)} := \sigma (4d^3+27e^2) z^6 + (-36d^3-243e^2) z^4
  + \sigma (81d^3+729e^2)z^2 - 729e^2 \in \Q[z]$
\State $k:= s^3+ds+e\in\mathbb{Q}[s]$
\If{$k$ has exactly one real root}
\If{$e<0$}
\Return $(J_{10}^+,\ x^3+ax^2y^2+xy^4,\ \operatorname{m}_{(0, \infty)}(p^+),\ (0, \infty) )$
\EndIf

\If{$e>0$}
\Return $(J_{10}^+,\ x^3+ax^2y^2+xy^4,\ \operatorname{m}_{(-\infty,0)}(p^+),\ (-\infty,0) )$
\EndIf

\If{$e=0$}
\Return $(J_{10}^+,\ x^3+ax^2y^2+xy^4,\ z,\ [0,0] )$

\EndIf
\Else
\If{$k(0)=0$}
\State $C_1 := (J_{10}^+,\  x^3+ax^2y^2+xy^4,\ z^2-\frac{9}{2},\  (-\infty,0))$
\State $C_2 := (J_{10}^-,\  x^3+ax^2y^2-xy^4,\ z,\  [0,0])$\vspace{1mm}
\State $C_3 := (J_{10}^+,\  x^3+ax^2y^2+xy^4,\ z^2-\frac{9}{2},\  (0,\infty))$

\Else
\State $\varepsilon:=1$
\Do
\State replace $\varepsilon$ by a rational number in the interval $(0,\frac{\varepsilon}{2})$
\State \parbox[t]%
  {\linewidth-\algorithmicindent-\algorithmicindent-\algorithmicindent}{
    let $(z_1, \ldots, z_4) \in \Q^4$ be approximations of the four distinct
    real roots of $p^+$ in increasing order with error smaller than
    $\varepsilon$
  }
\State $(I_1,\ldots,I_4):=((z_1-\varepsilon,z_1+\varepsilon),\ldots,(z_4-\varepsilon,z_4+\varepsilon))$
\doWhile{$p^+$ has more than one real root in any of the intervals $I_1,\ldots,I_4$}
\If{$e<0$}
\State $C_1 := (J_{10}^-,\ x^3+ax^2y^2-xy^4,\ \operatorname{m}_{(-\infty,0)}(p^-),\ (-\infty, 0))$
\State $C_2:=(J_{10}^+,\ x^3+ax^2y^2+xy^4,\ \operatorname{m}_{I_1}(p^+),\ I_1)$
\State $C_3:=(J_{10}^+,\ x^3+ax^2y^2+xy^4,\ \operatorname{m}_{I_3}(p^+),\ I_3)$
\Else
\State $C_1 := (J_{10}^-,\ x^3+ax^2y^2-xy^4,\ \operatorname{m}_{(0,\infty)}(p^-),\ (0,\infty))$
\State $C_2:=(J_{10}^+,\ x^3+ax^2y^2+xy^4,\ \operatorname{m}_{I_2}(p^+),\ I_2)$
\State $C_3:=(J_{10}^+,\ x^3+ax^2y^2+xy^4,\ \operatorname{m}_{I_4}(p^+),\ I_4)$
\EndIf
\EndIf
\Return $C_1,\ C_2,\ C_3$
\EndIf
\end{algorithmic}
\end{algorithm}

Let $f\in\m^3$ be a polynomial of complex type $J_{10}$ such that $f\in E_{18}^{(6,3)}$, that is, it is of the form
\[f=cx^3+bx^2y^2+dxy^4+ey^6+R,\qquad R\in E_{19}^{(6,3)}.\] 
Since $f$ is weighted $18$-determined, $f\sim f-R$. Hence, we may assume that
\[f= cx^3+bx^2y^2+dxy^4+ey^6.\]
If $c<0$, then we apply the transformation
\[x\mapsto -x,\qquad y\mapsto y.\]
Since $f$ is weighted homogeneous, a rescaling of the coordinates achieves that $c=1$.
By applying
\begin{equation}\label{Cancelling2ndTerm}
x\mapsto x-\frac{b}{3}y^2,\quad y\mapsto y,
\end{equation}
$f$ is transformed to a polynomial of the form
\[f=x^3+dxy^4+ey^6.\] Since, on the other hand, for $a',d'\in \R$ we have
\begin{equation}\label{phi'}
x^3+a'x^2y^2\pm |d'|xy^4\sim x^3+ax^2y^2\pm xy^4,\quad
a=\frac{a'}{\sqrt{|d'|}},
\end{equation} via the 
invertible transformation \[\alpha' : x\mapsto x,\, y\mapsto \frac{1}{\sqrt[4]{|d'|}}y\] (as applied in Step (IV) of the general algorithm), 
the problem is reduced to finding a transformation $\alpha$ such that $f=x^3+dxy^4+ey^6$ is mapped  to 
\begin{equation}\label{form}
\alpha(f)=x^3+a'x^2y^2\pm |d'|xy^4,\quad a',d'\in \R.
\end{equation}
The composition of $\alpha$ with the transformation in (\ref{Cancelling2ndTerm}) and the scaling of the coordinates, as described above, is then a suitable transformation to be applied in line \ref{line:weighteJet} of the general algorithm.

In the following, we describe a method to determine all polynomials as in (\ref{form}) in the equivalence class of $f$.
Since $\alpha$ has to be weighted homogeneous, it is of the form
\begin{equation}\label{equ form of alpha}
\alpha: x\mapsto x+cy^2,\qquad y\mapsto ty 
\end{equation}
with appropriate $t,c\in\R,\, t\neq 0$. We now determine all possible values for $t$ and $c$.
Applying a transformation as in (\ref{equ form of alpha}) to $f$, we obtain
\begin{equation}\label{eqc}
\alpha(f)=x^3+3cx^2y^2+(3c^2+t^4d)xy^4+(c^3+t^4dc+et^6)y^6.
\end{equation}
Taking $c'=\frac{c}{t^2}$, we can rewrite (\ref{eqc}) as
\begin{equation}\label{eqc'}
\alpha(f)=x^3+3t^2c'x^2y^2+t^4(3c'^2+d)xy^4+t^6(c'^3+dc'+e)y^6.
\end{equation}
Clearly, for a fixed value $t\neq 0$, $c'$ is any real root of
$k:=s^3+ds+e\in\mathbb{Q}[s]$. Due to the scaling of $y$ by $\alpha'$, we may assume that $t=1$ and $c=c'$.

To determine all possible normal form equations in the equivalence class of $f$, we have to consider the following cases:
\begin{itemize}[leftmargin=10mm]
\item[(i)]$k(s)$ has one real root;
\item[(ii)]$k(s)$ has three real roots.
\end{itemize}

Denote by $c_1$, $c_2$ and $c_3$ the complex roots of $k(s)$. Then
\begin{equation}\label{RelationOfRoots}
c_1+c_2+c_3=0,\quad c_1c_2+c_1c_3+c_2c_3=d, \quad\text{and}\quad -c_1c_2c_3=e.
\end{equation}

(i) Let $c=c_1$ be the real root and let $c_2$ and $c_3$ be the complex conjugate roots of $k(s)$. It follows from (\ref{RelationOfRoots})
that $c_2c_3=d+c_1^2$. Since the product of two non-zero complex conjugates is positive,
$d+c_1^2>0$, which implies $d+3c_1^2>0$. Hence, considering
(\ref{eqc'}), $f$ is of type $J_{10}^+$. Since the real root $c_1$ is uniquely determined, the transformation $\alpha$ as considered above  and the real subtype are also uniquely determined. In particular, $a=\frac{3c_1}{\sqrt{3c_1^2+d}}$ is uniquely determined. Define $a_1:=a$, $a_2:=\frac{3c_2}{\sqrt{3c_2^2+d}}$, and $a_3:=\frac{3c_3}{\sqrt{3c_3^2+d}}$. Note that ${da_j^2}=c_j^2(9-3a_j^2)$. Since $c_1$, $c_2$, and $c_3$ are  the roots of $k(s)$, they are also roots of
\[\tilde k(s):=-k(s)\cdot k(-s)=s^6+2ds^4+d^2s^2-e^2.\]
Multiplying $\tilde k(s)$ with $(9-3a_j^2)^3$, we see that $da_j^2$ is a root of 
\[\tilde h_j(w)=w^3+2dw^2(9-3a_j^2)+d^2w(9-3a_j^2)^2-e^2(9-3a_j^2)^3\in\mathbb{Q}[w],\]
that is, $\pm a_j$, $j=1,2,3$, are the roots of
\[h(z):=(4d^3+27e^2)z^6+(-36d^3-243e^2)z^4+(81d^3+729e^2)z^2-729e^2\in\mathbb{Q}[z].\]

By $e=-c_1c_2c_3=0$ and $c_2,c_3\neq 0$, it follows that $$e=0 \Leftrightarrow c_1=0 \Leftrightarrow  a=0.$$

We show that this is the case if and only if  $a_2\in\R$ or $a_3\in\R$. Assume that, without loss of generality, $a_2$ is real. Then $a_2^2=\frac{9c_2^2}{3c_2^2+d}$ is real, which implies that $3c_2^2+d=\lambda c_2^2$ for some $\lambda\in\R$. Therefore $d=\lambda' c_2^2$ for some $\lambda'\in\R$. Since $d\in\R$, it follows that $c_2^2\in\R$. Because $c_2\in\C\setminus\R$, it follows that $c_2=\gamma i$ for some $\gamma\in\R$, hence $c_3=-\gamma i$. This implies that $a_3$ is real, since $d=c_2c_3=\gamma^2$. By (\ref{RelationOfRoots}) we obtain $c_1=0$. For the converse, note that if $c_1=0$, then $c_2=\gamma i$ and $c_3=-\gamma i$, hence $a_2$ and $a_3$ are real.

We now consider the case where $e\neq 0$. Using the fact that $c_2c_3$ is positive and $e=-c_1c_2c_3$, it follows that$$-\sign(e)=\sign(c_1)=\sign(a).$$ Hence, a minimal polynomial for $a$ is obtained as the monic irreducible factor of $h(x)$ with a root in $(0,\infty)$ in case $e<0$, or in $(-\infty,0)$ in case $e>0$. Moreover, $a$ is the unique root of this factor in this interval. \medskip

(ii) In the case where $k(s)$ has three real roots, they must be pairwise different, otherwise $k(s)$ and its derivative $k'(s)$ have a common
root, which implies that $f$ is degenerate, since the coefficient of $xy^4$ in (\ref{eqc'}) vanishes for $c$ a double root of $k(s)$. Without loss of generality, we can assume that $c_1<c_2<c_3$. Therefore $a$ can attain the three different values
$$a_j:=\frac{3c_j}{\sqrt{|d+3c_j^2|}},\quad j=1,2,3.$$

In case $c_2=0$, we have $c_3=-c_1=\sqrt{-d}$ with $d<0$, which implies that $a_1=-\frac{3}{\sqrt{2}}$, $a_2=0$, and $a_3=\frac{3}{\sqrt{2}}$. Considering the sign of $3c_i^2+d$,  we obtain that $c_1$ and $c_3$ correspond to type $J_{10}^+$ and $c_2$ corresponds to $J_{10}^-$.

Next we consider the case $c_2\neq0$.
Since $c_1+c_2+c_3=0$, not all three roots have the same
sign. Suppose $-\sign(c_1)=\sign(c_2)=\sign(c_3)$. Note that $|c_1|=|c_2+c_3|=|c_2|+|c_3|$, that is, $|c_1|>|c_2|$ and
$|c_1|>|c_3|$. Then $3c_1^2+d=3c_1^2+c_1c_2+c_1c_3+c_2c_3>0$, since
$|c_1^2|>|c_1c_3|$ and $|c_1^2|>|c_1c_2|$, and, moreover, $c_1^2>0$, $c_1c_2<0$,
$c_1c_3<0$, and $c_2c_3>0$. Hence, $c_1$ corresponds to type $J_{10}^+$.
Furthermore
$$3c_j^2+d=3c_j^2+c_1(c_2+c_3)+c_2c_3=3c_j^2-(c_2+c_3)(c_2+c_3)+c_2c_3
=3c_j^2-(c_2^2+c_2c_3+c_3^2),$$
for $j=2,3$. Since $|c_2|<|c_3|$, we obtain that $c_2$ corresponds to type
$J_{10}^-$ and $c_3$ to $J_{10}^+$. The remaining case $-\sign(c_1)=-\sign(c_2)=\sign(c_3)$ can be treated in a similar manner, leading again to types $J_{10}^+$, $J_{10}^-$ and $J_{10}^+$ corresponding to $c_1$, $c_2$ and $c_3$.

We now determine the corresponding values of the moduli parameter. We first consider the case $J_{10}^-$. Note that $$-d(a_2)^2=c_2^2(9+3(a_2)^2)\text{\hspace{2mm}and\hspace{2mm}}-d(ia_j)^2=c_j^2(9+3(ia_j)^2),\quad j=1,3.$$ By multiplying $\tilde k(s)$ with $9+3(ia_j)^2$, for $j=1,3$, and with $9+3a_j^2$, for $j=2$, it follows that the roots of
\[h^-(z):=(-4d^3-27e^2)z^6+(-36d^3-243e^2)z^4+(-81d^3-729e^2)z^2-729e^2\in\mathbb{Q}[z]\]
are $\pm i a_1$, $\pm a_2$, and $\pm i a_3$. 
Since $\sign(c_1)=-\sign(c_3)$, we have$$\sign(a_2)=\sign(c_2)=\sign(e),$$ which determines an appropriate interval containing $a_2$.

For the parameters which correspond to $J_{10}^+$, we can construct, in a similar manner, the polynomial
 \[h^+(z):=(4d^3+27e^2)z^6+(-36d^3-243e^2)z^4+(81d^3+729e^2)z^2-729e^2\in\mathbb{Q}[z]\]
with roots $\pm a_1$, $\pm i a_2$, and $\pm a_3$.
Since, in these cases, either $|c_2|<|c_1|$ or $|c_2|<|c_3|$, it follows that $d<0$. An easy calculation shows that $|c_1|<|c_3|$ if and only if $|a_3|<|a_1|$. 

If $e<0$, that is, $c_2<0$, we have that $|c_1|+|c_2|=|c_1+c_2|=|c_3|$ and, hence, that $|a_3|<|a_1|$.  Therefore, $a_1$ is the smallest negative real root and $a_3$ is the smallest positive real root of $h^+(z)$. 
Similarly, if $e>0$, then $|a_1|<|a_3|$, hence $a_1$ is the largest negative real root, and $a_3$ the largest positive real root of $h^+(z)$.

\begin{remark}
  With regard to the implementation, we use the \textsc{Singular} library {\tt solve.lib} \citep{solvelib} and Sturm chains, as implemented in the 
  library {\tt rootsur.lib} \citep{roots}, to determine intervals with rational boundaries containing the roots.
  \end{remark}

\subsection{The \texorpdfstring{$\boldsymbol{X_9}$}{X9} case}

According to Theorem~29 in \citet{realclassify2}, the real right equivalence
class of a singularity of real main type $X_9$ always contains exactly two
normal form equations from Arnold's list, of possibly different real subtypes.
There are four different possible cases for a given singularity of real main
type $X_9$:
\begin{enumerate}[label=(\Alph*)]
\item\label{enum:X_9:caseA}
The singularity is right equivalent to $\NF\bigl(X_9^{++}\bigr)(a)$ for two
different values of the parameter~$a$ with $a > -2$.

\item\label{enum:X_9:caseB}
The singularity is right equivalent to $\NF\bigl(X_9^{--}\bigr)(a)$ for two
different values of the parameter~$a$ with $a < 2$.

\item\label{enum:X_9:caseC}
The singularity is right equivalent to both $\NF\bigl(X_9^{+-}\bigr)(a)$ and
$\NF\bigl(X_9^{-+}\bigr)(a)$ for some unique value $a \in \R$ of the parameter.

\item\label{enum:X_9:caseD}
The singularity is right equivalent to $\NF\bigl(X_9^{++}\bigr)(a)$ for some
value $a < -2$ of the parameter and to $\NF\bigl(X_9^{--}\bigr)(a)$ for some
$a > 2$.
\end{enumerate}

In the following, we discuss how Algorithm~\ref{alg:X_9} determines in which of
these cases a given singularity falls, and which are the corresponding values
of the parameter. We do not strictly follow Algorithm~\ref{alg:ClassPara}
because line~\ref{line:linearCoordinateChange} would, in general, require
working over algebraic extensions. Due to this difficulty, especially Step~(IV)
needs a specific approach.

\begin{algorithm}[htp]
\caption{Algorithm for the case $X_9$}%
\label{alg:X_9}
\begin{algorithmic}[1]

\Require{$f \in \m^3 \subset \Q[x,y]$ of complex singularity type $T = X_9$}

\Ensure{all normal form equations in the right equivalence class of $f$, each
specified as a tuple of the real singularity type, the normal form, the minimal
polynomial of the parameter, and an interval such that the parameter is the
unique root of the minimal polynomial in this interval}

\State $f := \jet(f, 4)$
\State\label{line:X_9:linear_trafo}%
  make sure that the coefficient of $x^4$ in $f$ is non-zero by applying either
  $(x, y) \mapsto (y, x)$, $(x, y) \mapsto (x, x+y)$,
  $(x, y) \mapsto (x, 2x+y)$, or $(x, y) \mapsto (x, 3x+y)$ to $f$ if necessary
\State\label{line:X_9:Tschirnhaus}%
  apply $x \mapsto x-\frac{\coeff(f, x^3y)}{\coeff(f, x^4)}y,\; y \mapsto y$ to
  $f$
\State write $f$ as $f = bx^4+cx^2y^2+dxy^3+ey^4$ with $b, c, d, e \in \Q$ and
  $b \neq 0$

\State\label{line:X_9:minpoly}%
$\begin{aligned}[t]
p^{(\sigma)} :={}
& \left(-256b^3e^3+128b^2c^2e^2-144b^2cd^2e+27b^2d^4-16bc^4e+4bc^3d^2\right)
  \cdot z^6 \\
& +\sigma \left(18432b^3e^3+11520b^2c^2e^2-5184b^2cd^2e+972b^2d^4+144bc^3d^2
  +16c^6\right) \cdot z^4 \\
& +\left(-331776b^3e^3-62208b^2cd^2e+11664b^2d^4-11520bc^4e+1728bc^3d^2
  -128c^6\right) \cdot z^2 \\
& +\sigma \left(331776b^2c^2e^2-248832b^2cd^2e+46656b^2d^4-18432bc^4e
  +6912bc^3d^2+256c^6\right) \\
& \in \Q[z]
\end{aligned}$

\State let $r$ be the number of real roots of $f(x, 1) = bx^4+cx^2+dx+e$

\If{$r = 0$}
  \If{$b > 0$}
    \If{$f \sim +x^4+ax^2y^2+y^4$ for some $a \in [0, 2)$}%
        \label{line:X_9:caseAstart}
      \State $I_1 := [0, 2)$, $I_2 := (2, 6]$
    \Else
      \State $I_1 := (-2, 0)$, $I_2 := (6, \infty)$
    \EndIf
    \State $C_1 := \left( X_9^{++},\; +x^4+ax^2y^2+y^4,\; m_{I_1}(p^+),\; I_1
      \right)$
    \State $C_2 := \left( X_9^{++},\; +x^4+ax^2y^2+y^4,\; m_{I_2}(p^+),\; I_2
      \right)$%
        \label{line:X_9:caseAend}
  \Else
    \If{$f \sim -x^4+ax^2y^2-y^4$ for some $a \in (-2, 0]$}
      \State $I_1 := [-6, -2)$, $I_2 := (-2, 0]$
    \Else
      \State $I_1 := (-\infty, -6)$, $I_2 := (0, 2)$
    \EndIf
    \State $C_1 := \left( X_9^{--},\; -x^4+ax^2y^2-y^4,\; m_{I_1}(p^+),\; I_1
      \right)$
    \State $C_2 := \left( X_9^{--},\; -x^4+ax^2y^2-y^4,\; m_{I_2}(p^+),\; I_2
      \right)$
  \EndIf
\EndIf

\If{$r = 2$}
  \If{$f \sim +x^4+ax^2y^2-y^4$ for some $a \in (-\infty, 0]$}
    \State $I := (-\infty, 0]$
  \Else
    \State $I := (0, \infty)$
  \EndIf
  \State $C_1 := \left( X_9^{+-},\; +x^4+ax^2y^2-y^4,\; m_I(p^-),\; I \right)$
  \State $C_2 := \left( X_9^{-+},\; -x^4+ax^2y^2+y^4,\; m_I(p^-),\; I \right)$
\EndIf

\If{$r = 4$}
  \If{$f \sim +x^4+ax^2y^2+y^4$ for some $a \in (-\infty, -6]$}
    \State $I_1 := (-\infty, -6]$, $I_2 = (2, 6]$
  \Else
    \State $I_1 := (-6, -2)$, $I_2 = (6, \infty)$
  \EndIf
  \State $C_1 := \left( X_9^{++},\; +x^4+ax^2y^2+y^4,\; m_{I_1}(p^+),\; I_1
    \right)$
  \State $C_2 := \left( X_9^{--},\; -x^4+ax^2y^2-y^4,\; m_{I_2}(p^+),\; I_2
    \right)$
\EndIf

\Return $C_1,\, C_2$

\end{algorithmic}
\end{algorithm}

First of all, the determinacy of a given polynomial $f \in \Q[x, y]$ of complex
singularity type $X_9$ is $4$, thus it suffices to consider $\jet(f, 4)$. Some
calculations in linear algebra show that we can assume the coefficient of $x^4$
to be non-zero, by applying a linear coordinate transformation as in
line~\ref{line:X_9:linear_trafo} of Algorithm~\ref{alg:X_9} if necessary. For
convenience, we can then get rid of the term $x^3y$ by the transformation given
in line~\ref{line:X_9:Tschirnhaus} such that $f$ is of the form
\[
f = bx^4+cx^2y^2+dxy^3+ey^4 \text{ with } b, c, d, e \in \Q \text{ and }
b \neq 0.
\]

The number of real roots of $f(x, 1) = bx^4+cx^2+dx+e$, which geometrically
correspond to the points of the strict transform on the exceptional divisor of
the blow-up at the origin, is invariant under right equivalence and can thus be
used as a first step to distinguish the four cases mentioned above. In fact, in
the cases~\ref{enum:X_9:caseA} and \ref{enum:X_9:caseB} the polynomial
$f(x, 1)$ has no real roots, in the case~\ref{enum:X_9:caseC} it has two real
roots, and in the case~\ref{enum:X_9:caseD} it has four real roots.
Furthermore, the two cases~\ref{enum:X_9:caseA} and \ref{enum:X_9:caseB} can be
distinguished by the sign of the coefficient of $x^4$ because it is invariant
under right equivalence if $f$ is of either one of these cases.

It remains to determine the possible values of the parameter $a$. Using the
techniques from Section~5.1 in \citet{realclassify2}, one can determine, for
each real subtype, a polynomial whose coefficients depend on those of $f$ and
whose roots are precisely the possible values of the parameter in the normal
form equations to which $f$ is complex right equivalent. In
Algorithm~\ref{alg:X_9}, this is the polynomial $p^{(\sigma)}$ defined in
line~\ref{line:X_9:minpoly}, with $\sigma = +1$ for the subtypes $X_9^{++}$ and
$X_9^{--}$, and with $\sigma = -1$ for $X_9^{+-}$ and $X_9^{-+}$.

However, it turns out that $p^{(\sigma)}$ does not always factorize into linear
factors over $\Q$ and that the number of its real roots within the intervals
specified in the cases~\ref{enum:X_9:caseA} to \ref{enum:X_9:caseD} above is
larger than the number of admissible values for the parameter. This is due to
the fact that $p^{(\sigma)}$ also takes into account complex transformations.
In other words, for some of the real roots of $p^{(\sigma)}$, there is no real
transformation which takes $f$ to the respective normal form equation where the
value of the parameter is that root.

We use the following method to solve this problem. Considering, again,
Theorem~29 in \citet{realclassify2}, one may observe that the real roots of
$p^{(\sigma)}$ lie in fixed disjoint intervals. In the
case~\ref{enum:X_9:caseA}, for example, the polynomial $p^+$ has exactly one
real root in each of the intervals $(-2, 0)$, $(0, 2)$, $(2, 6)$, and
$(6, \infty)$ (and two more in $(-\infty, -6)$ and $(-6, -2)$, which are not
admissible values for the parameter in this case), or it has two double roots
at $0$ and $6$ (and one more double root at $-6$, which we do not consider
either). According to that theorem, the two roots which are admissible values
for the parameter are either those in $[0, 2)$ and $(2, 6]$ or those in
$(-2, 0)$ and $(6, \infty)$.

Using the techniques from Section~5.3 in \citet{realclassify2}, we set up, for
a generic parameter $a$, the ideal of transformations which take $f$ to the
normal form equation with this parameter. Note that a real point in the
vanishing set of this ideal corresponds to a real transformation of $f$. We can
thus determine whether there exists a real transformation which maps $f$ to a
normal form equation where the parameter lies in a specific interval. To determine this, we can use an algorithm for real root isolation implemented in the \textsc{Singular} library \texttt{rootisolation.lib} \citep{rootisolationlib}. Applying ideas from \citet[Ch. 6]{SW}, it is based on a subdivision method using interval arithmetic to do an exclusion test and an interval Newton step for an inclusion test. Alternatively one could also use methods based on quantor ellimination as described in Algorithm~12.8 from \citet{BPR}, however, these methods are less efficient.  
Continuing with
case~\ref{enum:X_9:caseA}, if there exists a real transformation for the real
root of $p^+$ in $[0, 2)$, then the two admissible values of the parameter are
given by this root and the one in $(2, 6]$, otherwise they are given by the two
roots of $p^+$ in $(-2, 0)$ and $(6, \infty)$, see
lines~\ref{line:X_9:caseAstart} - \ref{line:X_9:caseAend} in
Algorithm~\ref{alg:X_9}. The cases~\ref{enum:X_9:caseB}, \ref{enum:X_9:caseC},
and \ref{enum:X_9:caseD} are treated in an analogous way.

\begin{remark}
Instead of determining the applicable case and then use a prior known intervals, one can also use an ansatz and  real root isolation to find appropriate intervals for the possible parameter values along with the computation of the case. To test whether $f = bx^4+cx^2y^2+dxy^3+ey^4$ with $b, c, d, e \in \Q$ is right equivalent to $f_0=\sigma_1 x^4 +ax^2y^2+\sigma_2 y^4$ with fixed signs $\sigma_i$ and undetermined $a$, we make an ansatz for the  right equivalence transformation $\phi(x)=\alpha x +\beta y$, $\phi(y)=\gamma x+ \delta y$ and isolate the roots $(\alpha,\beta,\gamma,\delta, a)\in \mathbb{R}^5$ for the polynomial system arising from the condition $\phi(f)=f_0$. This system is represented by an ideal $I\subset \mathbb{Q}[\alpha,\beta,\gamma,\delta, a]$. If this system has a real solution, then $f$ is of type $X_9^{\sigma_1\sigma_2}$. 

Root isolation applied to $I$ yields Cartesian products of intervals, where each product contains a unique solution. Let $p$ be a generator of the elimination ideal $ I \cap \mathbb{Q}[a]$. We can design the root isolation algorithm in a way that each of the $a$-intervals of the solutions of $I$ contains precisely one solution of $p$. Then the minimal non-empty intersections of 
$a$-intervals of solutions of $I$ 
 lead to intervals isolating the possible parameter values as solutions of $p$. A minimal polynomial for the parameter value is obtained as an irreducible factor of $p$. Recall that it is not sufficient to do root isolation on $p$, since the transformations $\phi$ corresponding to these roots may not be defined over the reals.

For performance reasons, we can consider a projection $\mathbb{R}^5\rightarrow \mathbb{R}^2$, $(\alpha,\beta,\gamma,\delta, a)\mapsto (c_1\alpha+c_2\beta+c_3\gamma+c_4\delta , a)$ with $c_i\in \mathbb{Q}$. 
In every case of Algorithm \ref{alg:X_9} the number values for $a$ occurring for real solutions is known. If the number of solutions computed from the projection coincides with the expected number, the projection was general enough, otherwise we choose a different projection.
\end{remark}

\section{Hyperbolic Singularities}\label{sec:Hyper}

In this section, we give details on the application of the general algorithm (Algorithm \ref{alg:ClassPara}) for the hyperbolic singularities. See Figure \ref{fig hyperbolic} for these cases.

\begin{figure}[ht]
\begin{center}
\setlength{\tabcolsep}{3mm}
\begin{tabular}
[c]{ccc}%
{\includegraphics[
height=2.3in,
]%
{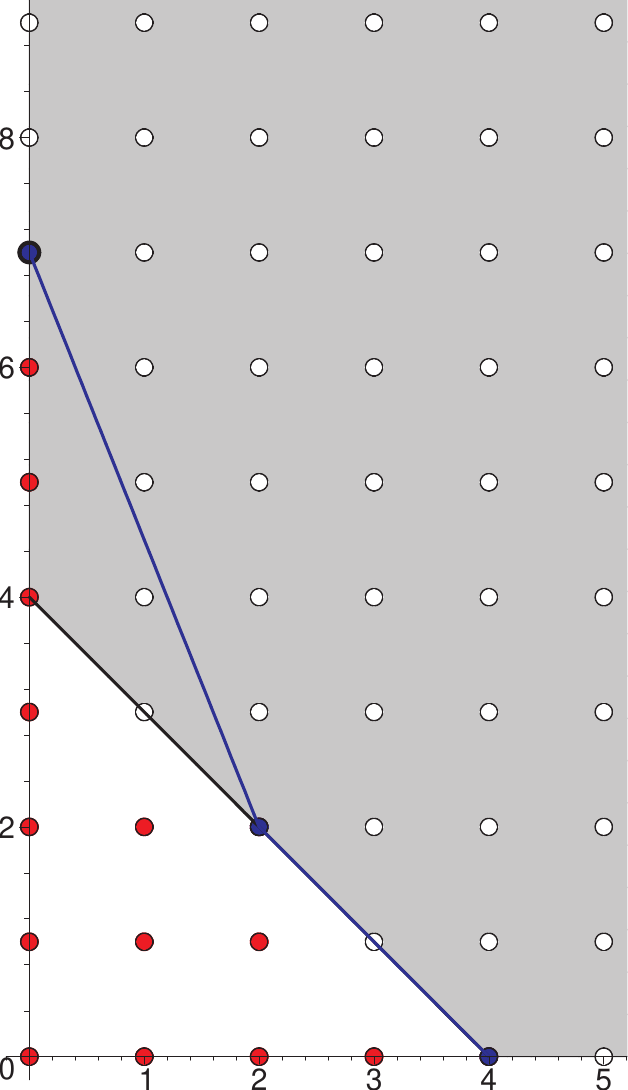}%
}%
&
{\includegraphics[
height=2.3in,
]%
{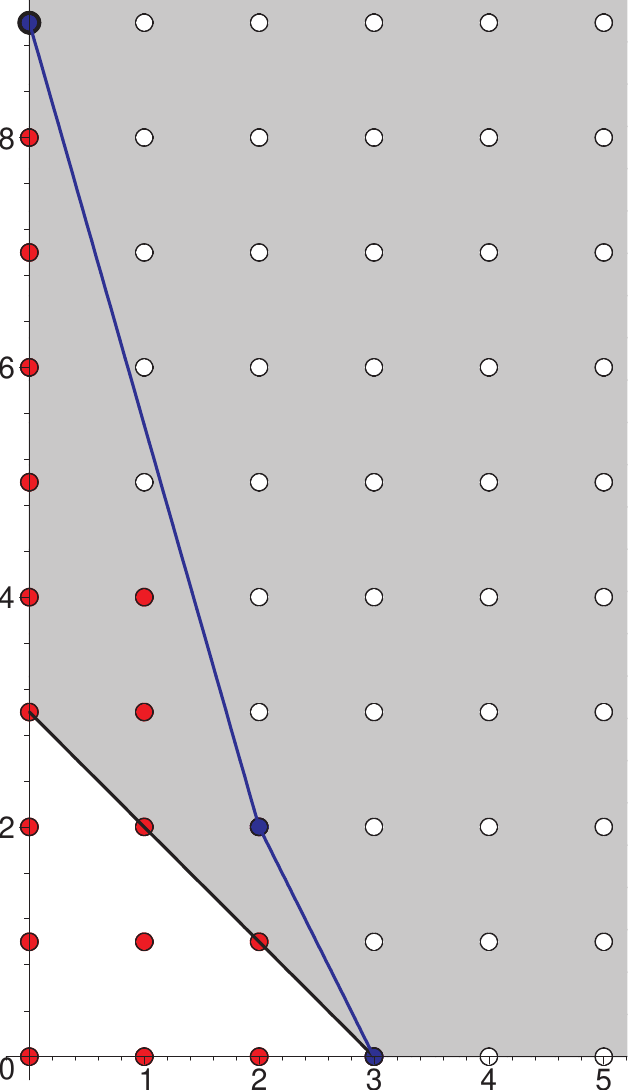}%
}%
&
{\includegraphics[
height=2.3in,
]%
{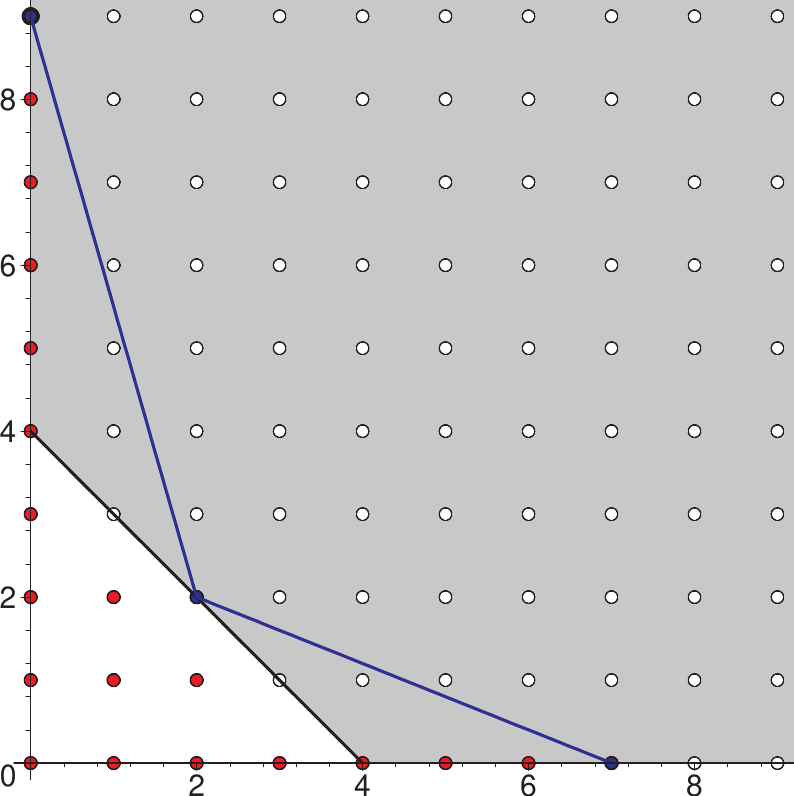}%
}%
\\
$X_{9+k}$ & $J_{10+k}$ & $Y_{r,s}$ and $ \tilde{Y}_{r}$
\end{tabular}
\caption{Hyperbolic Singularities}%
\label{fig hyperbolic}%
\end{center}
\end{figure}

\subsection{The $X_{9+k}$ case.}

In Algorithm \ref{alg:X_{9+k}} we demonstrate the general algorithm in the cases $X_{9+k}$. Note that the Milnor number $\mu$ of the input polynomial $f$ can be easily computed, using Gr\"obner basis techniques, and that $k=\mu-9$, see \cite{AVG1985}. 
Also note that $
E_5\cap E_{2\mu-9}^{(\mu-7,2)}$ is
the vector space generated by all monomials above the Newton polygon. 
    In line \ref{line X9k odd} we use that if the exponent of the $y^{k+4}=y^{\mu-5}$ term is odd, there are two possible normal form equations, which can be obtained from each other by the transformation $x\mapsto x$, $y\mapsto -y$.
In the algorithm we use the following notation.
\begin{notation}
If $I\subset \R$ and $\lambda\in\R$, then we define $\lambda I$ as the set 
\[\lambda I:=\{\lambda x\mid x\in I\}.\]
\end{notation}
The cases $J_{10+k}$ can be handled in a similar manner.

\begin{algorithm}[ht]
\caption{Algorithm for the case $X_{9+k}$}%
\label{alg:X_{9+k}}
\begin{algorithmic}[1]

\Require{$f \in \m^3\subset\Q[x,y]$ of complex singularity type $T=X_{9+k}$}

\Ensure{
all normal form equations in the right equivalence class of $f$, each specified as a tuple of the real singularity type, the normal form, the minimal polynomial of the parameter, and an interval such that the parameter is the unique root of the minimal polynomial in this interval
}

\vspace{1mm}

\noindent\textit{II. Eliminate the monomials in $\supp(f)$ underneath or on $\Gamma(T)$}

\vspace{1mm}

\State apply Algorithm \ref{alg:1jetX9+k} to $f$

\vspace{0.1cm}
\noindent\textit{Use Algorithm \ref{alg:Transformation} iteratively:}
\vspace{0.1cm}

\State $\mu:=9+k$
\State $c := \coeff(f,x^2y^2)$

\For{$i=4\ldots,\left\lfloor\frac{\mu-3}{2}\right\rfloor$}
\State $t :=  \coeff(f,xy^{i})$
\State apply $x\mapsto x-\frac{t}{2c}y^{i-2}$, $y\mapsto y$ to $f$
\EndFor

\vspace{0.1cm}
\noindent\emph{III. and IV. Discard the monomials above $\Gamma(T)$ and read off the desired information:}
\vspace{0.1cm}

\State write $f$ as $f = b_0x^4+b_1x^2y^2+b_2y^{\mu-5}$ with $b_0,b_1,b_2\in\Q^{*}$ and $R\in E_5\cap E_{2\mu-9}^{(\mu-7,2)}$\label{line X9k III and IV}
\vspace{1mm}
\State $T_{\R}:= X_{9+k}^{\operatorname{sign}(b_0),\ \operatorname{sign}(b_1)}$\vspace{1mm}
\State $F:= \operatorname{sign}(b_0)\cdot x^4+ \operatorname{sign}(b_1)\cdot x^2y^2+ay^{\mu-5}$\vspace{1mm}
\State $p:= z^4-b_2^4\left(\frac{|b_0|}{b_1^{2}}\right)^{\mu-5}$\vspace{1mm}

\State $I_1:=\operatorname{sign}(b_2)\cdot(0,\infty)$, $I_2:=\operatorname{sign}(b_2)\cdot(-\infty,0)$

\If{$\mu$ is odd}\label{line X9k odd}

\Return $(T_{\R},\ F,\ \operatorname{m}_{I_1}(p),\ I_1)$ 

\Else

\Return $(T_{\R},\ F,\ \operatorname{m}_{I_1}(p),\ I_1),\ (T_{\R},\ F,\ \operatorname{m}_{I_2}(p),\ I_2)$

\EndIf

\end{algorithmic}
\end{algorithm}

\subsection{The \texorpdfstring{$\boldsymbol{Y_{r,s}}$}{Yrs} case}%
\label{sec Y}

By the following lemma, if the given polynomial $f\in\Q[x,y]$ is of real main type $Y_{r,s}$ with $r\neq s$, then the $4$-jet of $f$  always factors into linear factors over $\Q$. Hence we can follow the general algorithm without any additions. The same may happen if $f$ is of real main type $Y_{r,r}$. However, if $f$ is of real main type $Y_{r,r}$ and the $4$-jet of $f$ does not factor into linear factors over $\Q$, line \ref{line:linearCoordinateChange} of Algorithm \ref{alg:ClassPara} requires a real algebraic field extension.

\begin{lemma}\label{IrrationalPoints}
Let $f\in\Q[x,y]$ be of real main type $Y_{r,s}$. If the $4$-jet of $f$ does not factorize into linear factors over $\Q$, then $r=s$, that is, $f$ is of real main type $Y_{r,r}$.
\end{lemma}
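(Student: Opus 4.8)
The plan is to extract from the hypothesis a Galois symmetry of $f$ and to show that this symmetry interchanges the two ``ends'' of the singularity, whose associated invariants are exactly $r$ and $s$; once the two ends are exchanged by an automorphism fixing $f$, the invariants must agree.

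First I would analyze $\jet(f,4)$. Since $f$ is of real main type $Y_{r,s}$ with $r,s>4$, the lowest-degree part of every normal form equation $\NF(Y_{r,s})(a)$ is $\pm x^2y^2$, so writing $f=\phi(\NF(Y_{r,s})(a))$ and taking linear parts $\ell_1:=\phi_0(x)$, $\ell_2:=\phi_0(y)$ yields $\jet(f,4)=c\,\ell_1^2\ell_2^2$ with $c\in\R^\ast$ and $\ell_1,\ell_2$ two non-proportional real linear forms (non-proportional because $\Gamma(Y_{r,s})$ has two faces; this is also the meaning of the four real roots counted with multiplicity, via Proposition~8 of \citet{realclassify1}). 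Because $\jet(f,4)\in\Q[x,y]$, its two roots $[\ell_1],[\ell_2]\in\Pj^1$, each of multiplicity $2$, form a Galois-stable set on which Galois acts by multiplicity-preserving permutations. If $[\ell_1]$ were $\Q$-rational, then so would be the complementary root $[\ell_2]$, and $\jet(f,4)$ would split into linear factors over $\Q$. Hence, under our hypothesis, $[\ell_1]$ and $[\ell_2]$ are conjugate over a real quadratic field $K=\Q(\sqrt{D})$, and (after possibly swapping $x,y$) we may take $\ell_1=x-\alpha y$, $\ell_2=x-\sigma(\alpha)y$ with $\alpha\in K$ and $\sigma$ the nontrivial element of $\mathrm{Gal}(K/\Q)$, so that $\sigma(\ell_1)=\ell_2$ exactly.

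Next I would pass to the $K$-linear coordinates $u=\ell_1$, $v=\ell_2$. Writing $g\in K[u,v]$ for $f$ in these coordinates, so $g(\ell_1,\ell_2)=f$, and applying $\sigma$ coefficientwise (it fixes $x,y$, and $f\in\Q[x,y]$ forces $\sigma(f)=f$), the identities $\sigma(\ell_1)=\ell_2$ and $\sigma(\ell_2)=\ell_1$ give $(\sigma g)(\ell_2,\ell_1)=f=g(\ell_1,\ell_2)$, whence, after substituting back, $g(u,v)=(\sigma g)(v,u)$ in $K[u,v]$. Comparing coefficients, the coefficient $g_{ij}$ of $u^iv^j$ satisfies $g_{ij}=\sigma(g_{ji})$; thus $\supp(g)$ is symmetric under $(i,j)\mapsto(j,i)$, the diagonal coefficients lie in $\Q$, and in particular $\jet(g,4)=c\,u^2v^2$ with $c\in\Q^\ast$. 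In other words, the Newton polygon of $g$ is symmetric about the diagonal.

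Finally I would read off $r$ and $s$ as a right-equivalence invariant that this symmetry forces to coincide. The cleanest such invariant comes from blowing up the origin: for the normal form $\pm u^2v^2\pm u^r+a v^s$ a direct computation in the two standard charts shows that the strict transform meets the exceptional divisor $E\cong\Pj^1$ precisely at the points $[v=0]$ and $[u=0]$, carrying singularities of type $A_{r-5}$ and $A_{s-5}$, respectively (with $A_0$ meaning a smooth point when the exponent equals $5$). Since the multiset of these local types on $E$ is a right-equivalence invariant and $f\sim\NF(Y_{r,s})(a)$, the blow-up of $f$ carries the types $A_{r-5}$ and $A_{s-5}$ at the two points $[\ell_1],[\ell_2]$. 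The automorphism $\sigma$ is defined over $\Q$, fixes $V(f)$, and swaps $[\ell_1]\leftrightarrow[\ell_2]$, so it induces an isomorphism of the corresponding germs; hence $A_{r-5}\cong A_{s-5}$, i.e.\ $r=s$, as claimed.

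I expect the main obstacle to be this last step: one must isolate an invariant that genuinely separates $r$ from $s$. The naive candidate, the restriction order $\ord_u g(u,0)$, is tempting — and the symmetry $g(u,v)=(\sigma g)(v,u)$ does force $\ord_u g(u,0)=\ord_v g(0,v)$ — but this order is not a right-equivalence invariant, so equality of these orders does not by itself identify them with $r$ and $s$. The blow-up formulation repairs this, at the cost of carefully checking that the local types on $E$ are right-equivalence invariant and are respected by $\sigma$, and of the bookkeeping of which chart produces $A_{r-5}$ versus $A_{s-5}$ together with the degenerate cases $r=5$ or $s=5$.
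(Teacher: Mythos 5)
Your proposal is correct and takes essentially the same route as the paper: both proofs blow up at the origin, observe that under the hypothesis the two points where the strict transform meets the exceptional divisor are conjugate over a real quadratic extension of $\Q$, identify the germs there as $A_{r-5}$ and $A_{s-5}$ singularities, and use the Galois involution (which fixes $f$) to transport one germ onto the other, forcing $r=s$. The only place you gesture where the paper computes — showing the singularity type is ``respected by $\sigma$'' — is handled in the paper by conjugating the right equivalence $\varphi_1$ by $\kappa$ and extending $\kappa^{-1}\circ\varphi_1\circ\kappa$ to an $\R$-algebra automorphism, which is exactly the careful check you anticipated.
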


\begin{proof}
Let $f$ be of real main type $Y_{r,s}$ for some $r,s>4$. 
Then $f$ is of the form
\[f=(a_0x+a_1y)^2(b_0x+b_1y)^2+\textnormal{ higher terms in $x$ and
$y$}\] where $(a_0,a_1), (b_0,b_1)\in\R^2$ are linearly independent. 
Hence, the strict transform of the blow-up of $f$ at the origin intersects the exceptional divisor in exactly two points. 

Without loss of generality, we consider
the chart where $x=0$ is the local equation of the exceptional divisor. Then the strict transform is given by
\[\tilde f=(a_0+a_1y)^2(b_0+b_1y)^2+\textnormal{ terms that are divisible by
$x$.}\] Therefore, the intersection points of the strict transform with the exceptional divisor $\{x=0\}$ correspond to the zeros of the rational polynomial $p^2 =
(a_0+a_1y)^2(b_0+b_1y)^2$.  Since $\mathbb Q$ is a
perfect field and $p^2\in\Q[y]$, it
follows that $p=(a_0+a_1y)(b_0+b_1y)\in\Q[y]$.

Now consider the standard charts where $x=0$ and $y=0$, respectively, define the exceptional divisor. We show that if the strict transform has an irrational point on the exceptional divisor $E$ in any of the two charts, then in both charts it has two irrational points on $E$. Indeed, if $a_1=0$ or $b_1=0$, then $p$ has only rational roots. Hence, by the assumption that the strict transform has an irrational point on $E$, we have
$a_1\neq 0$ and $b_1\neq 0$, and at least one of the two roots $-\frac{a_0}{a_1}$, $-\frac{b_0}{b_1}$ is irrational.
Since $p$ is the minimal polynomial for these roots, we conclude that both $-\frac{a_0}{a_1}$ and $-\frac{b_0}{b_1}$ are irrational.

Note that these roots correspond to the irrational points $(0,-\frac{a_0}{a_1})$ and $(0,-\frac{b_0}{b_1})$ of the strict transform on $E$. Hence, the points of the strict transform on the exceptional divisor are irrational if and only if $p$ does not have any rational root. This, in turn, is the case if and only if $\jet(f,4)=(a_0x+a_1y)^2(b_0x+b_1y)^2$ does not factorize into linear factors over $\Q$.

Thus, under the assumptions of the lemma, both roots of $p$ and both points of the strict transform on the exceptional divisor are irrational. 
Consider the blow-up of the normal form equation of $f$ at the origin.
The germs at the two intersection points $q_1$ and $q_2$ of the strict transform with the exceptional divisor are right equivalent to $f_1=\pm x^2\pm y^{r-4}$
and  $f_2=\pm x^2\pm y^{s-4}$. Since any right equivalence of a singularity induces right equivalences for each germ of the strict transform, 
the singularities of the strict transform of $f$ are also right equivalent to $f_1$ and~$f_2$.

The Galois group of the quadratic extension $\Q \subset \Q[y]/p =: K$ is
isomorphic to $\mathbb Z/2\mathbb Z$. Let $\kappa$ be the non-trivial element
in this group, and let $\varphi_1$ and $\varphi_2$ be the $K$-algebra
automorphisms which transform the germs at $q_1$ and $q_2$ to $f_1$ and $f_2$,
respectively.
Then $q_1$ and $q_2$ are conjugate via $\kappa$, and we have
$\kappa^{-1} \circ \varphi_1 \circ \kappa = \kappa(\varphi_1) = \varphi_2$ as
$K$-algebra automorphisms because both $\widetilde{f}$ and $f_1$ are invariant
under $\kappa = \kappa^{-1}$. In other words, $\varphi_1$ and $\varphi_2$ can be
identified with each other by the group action of $\operatorname{Gal}(K|\Q)$ on
$\operatorname{Aut}(K)$.
Also note that the map $\kappa^{-1} \circ \varphi_1 \circ \kappa$ can be extended
to an $\mathbb{R}$-algebra automorphism because its restriction to $K$ is just
the identity, and note that the germ at $q_2$ is right equivalent to $f_1$ via this
map. Hence $f_1 \sim f_2$ and thus $r = s$.

\end{proof}

We now discuss an explicit version of Algorithm \ref{alg:ClassPara} which works for all rational input polynomials $f\in\Q[x,y]$ of complex type $Y_{r,s}$. The framework is given in Algorithm \ref{alg:Y_{r,s}}. We realize Step (II) of the general algorithm in Algorithm \ref{alg:Y_{r,s}2}  and Step (IV) in Algorithm \ref{alg:Y_{r,s}IV}.

\begin{algorithm}[htp]
\caption{Algorithm for the cases $Y_{r,s}$
}%
\label{alg:Y_{r,s}}
\begin{algorithmic}[1]

\Require{$f \in \m^3\subset\Q[x,y]$ of complex singularity type $T=Y_{r,s}$}

\Ensure{
all normal form equations in the right equivalence class of $f$, each specified as a tuple of the real singularity type, the normal form, the minimal polynomial of the parameter, and an interval such that the parameter is the unique root of the minimal polynomial in this interval
}

\State $f_0:=f$

\noindent\textit{II. Eliminate the monomials in $\supp(f)$ underneath or on $\Gamma(T)$}

\vspace{1mm}

\State replace $f$ by the output of Algorithm \ref{alg:Y_{r,s}2} applied to $f$\vspace{1mm}

\noindent\textit{III. Eliminate the monomials above $\Gamma(T)$}
\vspace{1mm}

\State $f := w\dash\jet(f,d_w)$ with $w:=w(T)$ and $d_w:=d(T)$ 
\vspace{1mm}

\noindent\textit{IV. Read off the desired information}
\vspace{1mm}
\State Output of Algorithm \ref{alg:Y_{r,s}IV} applied to $f_0$ and $f$
\end{algorithmic}
\end{algorithm}

\begin{algorithm}[htp]
\caption{Step (II) in the general algorithm for the cases $Y_{r,s}$ 
}%
\label{alg:Y_{r,s}2}
\begin{algorithmic}[1]
\Require{$f \in \m^3\subset\Q[x,y]$ of complex singularity type $T=Y_{r,s}$}
\Ensure{$f'\in \m^3\subset K[x,y]$, where $K$ is $\Q$ or a quadratic extension field of $\Q$, such that $f\overset{K}{\sim} f'$ and $f'$ has no terms underneath $\Gamma(T)$ }

\State $h:=\jet(f,4)$

\State $b:=\coeff(f,x^2y^2)$

\noindent\textit{Implement line \ref{line:linearCoordinateChange} from Algorithm \ref{alg:ClassPara}:}

\If{$h$ has a linear factor over $\Q$}\label{line:case1(a)}

\State factorize $h$ as $bg_1^2g_2^2$ over $\Q$, where $g_1,g_2$ are homogeneous of degree $1$
\State\label{line:case1(b)} apply $g_1\mapsto x$, $g_2\mapsto y$ to $f$
\Else\label{line:case2(a)}

\State factorize $h$ as $bg^2$ over $\Q$, where $g$ is homogeneous of degree $2$
\State $K:= \Q[t]/g(1,t)$
\State over $K$, factorize $h$ as $bg_1^2g_2^2$, where $g_1,g_2$ are homogeneous of degree $1$
\State\label{line:case2(b)} apply $g_1\mapsto x$, $g_2\mapsto y$ to $f$
\EndIf

\noindent\textit{Implement lines \ref{line:n2forloop} - \ref{line:TwoFacesEnd0} from Algorithm \ref{alg:ClassPara}:}

\State \label{line:case1case2(a)}$n_1:=3$, $n_2:=3$
\While{ $\coeff(f,x^{n_1})=0$ or $\coeff(f,y^{n_2})=0$}
\If {$\coeff(f,x^{n_1})=0$}
\State $n_1:= n_1+1$
\State apply $x\mapsto x$ , $y\mapsto y-\frac{\coeff(f,x^{n_1}y)}{2b}x^{n_1-2}$ to $f$
\EndIf
\If{$\coeff(f,y^{n_2})=0$}
\State  $n_2:=n_2+1$
\State\label{line:case1case2(b)} apply $x\mapsto x-\frac{\coeff(f,xy^{n_2})}{2b}y^{n_2-2}$, $y\mapsto y$ to $f$
\EndIf
\EndWhile 
\Return $f':=f$
\end{algorithmic}
\end{algorithm}

Suppose $f$ is of real main type $Y_{r,s}$. Algorithm \ref{alg:Y_{r,s}2} removes all monomials below or on the Newton polygon, with the result defined either over $\Q$ or over a simple real algebraic extension of $\Q$. From this, Algorithm~\ref{alg:Y_{r,s}IV} computes a linear or quadratic minimal polynomial for the moduli parameter. The algorithm then determines which of the roots of the minimal polynomial are valid moduli parameters and computes the corresponding real subtypes.

We now give a detailed exposition of Algorithms \ref{alg:Y_{r,s}2} and \ref{alg:Y_{r,s}IV}. Before entering the algorithm we remember $f_0=f$. For Algorithm \ref{alg:Y_{r,s}2} we set $h:=\jet(f,4)$. The algorithm then considers the following two cases:

\begin{algorithm}[p]
\caption{Step (IV) in the general algorithm for the real main types $Y_{r,s}$}%
\label{alg:Y_{r,s}IV}
\begin{algorithmic}[1]
\Require{$f_0 \in \m^3\subset\Q[x,y]$ of real singularity type $ Y_{r,s}$ and $f\in\m^3\subset K[x,y]$, where $K=\mathbb{Q}$ or $K$ is a real quadratic extension of $\mathbb{Q}$, such that $f$ is of complex singularity type $T=Y_{r,s}$, $\supp(f)= \supp(T)$, and $f\overset{K}{\sim} f_0$}
\Ensure{
all normal form equations in the right equivalence class of $f_0$, each specified as a tuple of the real singularity type, the normal form, the minimal polynomial of the parameter, and an interval such that the parameter is the unique root of the minimal polynomial in this interval
}
\State $h:=\jet(f_0,4)$
 \If{$h$ has a linear factor over $\Q$}\label{line:caseIV1(a)}
\State write $f$ as $f=bx^2y^2+dx^{r}+ey^{s}$ with $b,d,e\in \mathbb{Q}^{*}$
\If{$r$ is odd and $s$ is even}
\State apply $x\mapsto y$, $y\mapsto x$ to $f$
\State write $f$ as $f=bx^2y^2+dx^{r}+ey^{s}$  with $b,d,e\in \mathbb{Q}^{*}$
\EndIf
\State $p:=z^{2r}-|b|^{-rs}d^{2s}e^{2r}$,\hspace{1mm} $\tilde p:=z^{2s}-|b|^{-rs}d^{2s}e^{2r} \in \mathbb{Q}[z]$
\State $\sigma_1 := \sign(b)$, $\sigma_2 := \sign(d)$, $\sigma_3 := \sign(e)$, $I :=(0,\infty)$
\If{$r$ and $s$ are even}
\State $C_1 := (Y_{r,s}^{\sigma_1\sigma_2},\  \sigma_1x^2y^2+\sigma_2x^{r}+ay^{s},\ \operatorname{m}_{\sigma_3\cdot I}(p),\  \sigma_3\cdot I )$
\State $C_2 := (Y_{s,r}^{\sigma_1\sigma_3},\  \sigma_1x^2y^2+\sigma_3 x^{s}+ay^{r},\ \operatorname{m}_{\sigma_2\cdot I}(\tilde p),\  \sigma_2\cdot I )$
\If {$r= s$ and $\sigma_2=\sigma_3$}
   \Return $C_1$
\Else
   \Return $C_1$, $C_2$
\EndIf

\EndIf
\If{$r$ is even and $s$ is odd}
\For{$i=1,2$}
\State $C_i := (Y_{r,s}^{\sigma_1\sigma_2},\ \sigma_1x^2y^2+\sigma_2x^{r}+ay^{s},\ \operatorname{m}_{(-1)^i\cdot I}(p),\ (-1)^i\cdot I )$
\State $C_{i+2} := (Y_{s,r}^{\sigma_1\, (-1)^i},\  \sigma_1x^2y^2+(-1)^i x^{s}+ay^{r},\ \operatorname{m}_{\sigma_2\cdot I}(\tilde p),\ \sigma_2\cdot I )$
\EndFor
\Return $C_1$,\ $C_2$,\ $C_3$,\ $C_4$
\EndIf
\If{$r$ and $s$ are odd}
\For {$i=1,2$}
\For{$j=1,2$}
\State $C_{2i+j-2}:=(Y_{r,s}^{\sigma_1\, (-1)^i},\  \sigma_1x^2y^2+(-1)^ix^{r}+ay^{s},\ \operatorname{m}_{(-1)^j\cdot I}(p),\ (-1)^j\cdot I )$
\State $C_{2i+j+2} := (Y_{s,r}^{\sigma_1\, (-1)^i},\  \sigma_1x^2y^2+(-1)^ix^{s}+ay^{r},\ \operatorname{m}_{(-1)^j\cdot I}(\tilde p),\ (-1)^j\cdot I )$
\EndFor
\EndFor
\If {$r= s$}
\Return $C_1$, $C_2$, $C_3$, $C_4$
\Else
\Return $C_1$, $C_2$, $C_3$, $C_4$, $C_5$, $C_6$, $C_7$, $C_8$ 
\EndIf
\EndIf\label{line:caseIV1(b)}
\Else\label{line:caseIV2(a)}
\State write $f$ as $f=bx^2y^2+dx^{r}+ey^{r}$  with $b\in \mathbb{Q}^{*}$ and $d,e\in K^{*}$
\State $\sigma :=\sign(b)$, $I:=(0,\infty)$
\If{$r$ is odd}\label{line:caseIV2(a)2}
\State $p:= z^2-|b|^{-r}(de)^2\in\mathbb{Q}[z]$
\For{$i=1,2$}
\State $C_i := (Y_{r,r}^{\sigma+},\ \sigma x^2y^2+x^{r}+ay^{r},\ \operatorname{m}_{(-1)^i\cdot I}(p),\ (-1)^i\cdot I)$
\State $C_{i+2} := (Y_{r,r}^{\sigma-},\ \sigma x^2y^2-x^{r}+ay^{r}, \operatorname{m}_{(-1)^i\cdot I}(p),\ (-1)^i\cdot I)$
\EndFor \label{line:caseIV2m}
\Return $C_1$, $C_2$, $C_3$, $C_4$

\algstore{breakalg}
\end{algorithmic}
\end{algorithm}

\begin{algorithm}[h]
\begin{algorithmic}[1]
\algrestore{breakalg}

\Else\label{line:caseIV2<0}
\If{$de<0$}
\For{$i=1,2$}
\State $C_i:=(Y^{\sigma\, (-1)^i}_{r,r},\ \sigma x^2y^2+(-1)^ix^{r}+ay^{r},\ z-(-1)^i|b|^{\frac{-r}{2}}de,\ (-\infty,\infty))$
\EndFor
\Return $C_1$, $C_2$ \label{line:caseIV2<02}
\Else \label{line:caseIV2m2}
\State write $g(1,t)$ as defined in Algorithm \ref{alg:Y_{r,s}2} as $g(1,t)=\alpha t^2+\beta t +\gamma$ and $d=\lambda_1+\lambda_2t$
\If{$\sign(\lambda_1-\lambda_2\cdot \frac{\beta}{2\alpha})=-1$ }
\Return $(Y^{\sigma\, -}_{r,r},\ \sigma x^2y^2-x^{r}+ay^{r},\ z+|b|^{\frac{-r}{2}}de,\ (-\infty,\infty))$
\Else
\Return $(Y^{\sigma\, +}_{r,r},\ \sigma x^2y^2+ x^{r}+ay^{r},\ z-|b|^{\frac{-r}{2}}de,\ (-\infty,\infty))$
\EndIf
\EndIf\label{line:caseIV2(b)}
\EndIf
\EndIf
\end{algorithmic}
\end{algorithm}

\begin{itemize}[leftmargin=10mm]
\item[1.] $h(1,y)$ has a linear factor over $\Q$:

\vspace{0.1cm}

\noindent This case is handled in lines \ref{line:case1(a)} - \ref{line:case1(b)} and \ref{line:case1case2(a)} - \ref{line:case1case2(b)} of Algorithm \ref{alg:Y_{r,s}2}. It includes all singularities of real main type $Y_{r,s}$ with $r\neq s$ and those of type $Y_{r,r}$ for which the $4$-jet factorizes into linear factors over $\Q$. We exactly follow the general algorithm.

\vspace{0.1cm}

\item[2.] $h(1,y)$ does not have a linear factor over $\Q$: 

\vspace{0.1cm}

\noindent This case is handled in lines \ref{line:case2(a)} - \ref{line:case1case2(b)} of Algorithm \ref{alg:Y_{r,s}2}. It includes all singularities of type $Y_{r,r}$ for which the $4$-jet does not factorize into linear factors over $\Q$.  We first discuss lines \ref{line:case2(a)}  - \ref{line:case2(b)}.  Over the reals, $\jet(f,4)$ factorizes as $g_1^2g_2^2$, with $g_1$ and $g_2$ non-associate and homogeneous of degree $1$. Since $\Q$ is a perfect field, we have $g_1 g_2\in\Q[x,y]$.  Since $g_1,g_2\not\in\Q[x,y]$, the polynomial $g_1g_2(1,y)$ is irreducible. Passing to the extension field $K=\Q[y]/g_1g_2(1,y)$, we apply the automorphism $g_1\mapsto x$, $g_2\mapsto y$ to $f$.  
{Lines \ref{line:case2(b)} - \ref{line:case1case2(b)}} of Algorithm \ref{alg:Y_{r,s}2} follow the general algorithm, using transformations over $K$ to find a polynomial which is right equivalent to $f$ and of the form
\[
bx^2y^2+dx^r+ey^s+R \text{ with } b \in \Q,\, d, e \in K, \text{ and }
R \in E_{n_1+1}^{(n_1-2,2)} \cap E_{n_1+1}^{(2,n_1-2)} \,.
\]
\end{itemize}

 We now pass to Algorithm \ref{alg:Y_{r,s}IV}. Here we set $h:=\jet(f_0,4)$. 
 
 The algorithm considers the following two cases:

\begin{itemize}[leftmargin=10mm]
\item[1.]  $h(1,y)$ has a linear factor over $\Q$:

\vspace{0.1cm}

\noindent This case is handled in lines \ref{line:caseIV1(a)} - \ref{line:caseIV1(b)} of Algorithm \ref{alg:Y_{r,s}IV}, which follow the general algorithm.
We use Theorem 33 in \cite{realclassify2} to determine the possible
normal form equations, depending on whether $r$ and $s$ are even or odd, respectively.

\vspace{0.1cm}

\item[2.] $h(1,y)$ does not have a linear factor over $\Q$: 

\vspace{0.1cm}

\noindent This case is handled in lines \ref{line:caseIV2(a)} - \ref{line:caseIV2(b)} of Algorithm \ref{alg:Y_{r,s}IV}. Note that the input polynomial $f$ is an element of $K[x,y]$, where $K=\mathbb Q[t]$ is a real quadratic number field. After normalizing and completing the square in $g(1,t)$ as defined in Algorithm \ref{alg:Y_{r,s}2}, we may assume that $t=\sqrt D$ for some positive discriminant $D\in\mathbb Q$.  Furthermore, note that $f$ is of the form
\[f=bx^2y^2+dx^r+ey^r \text{ with }  b\in\mathbb Q\text{ and } d,e\in K,\]
 and let $\sigma=\sign(b)$. 
 
The case where $r$ is odd is handled in lines \ref{line:caseIV2(a)2} - \ref{line:caseIV2m}. In this case, $f$ is right equivalent to both the types $Y_{r,r}^{\sigma\pm}$, taking into account the transformation $x\mapsto -x$ and $y\mapsto y$.  By Theorem 33 from \cite{realclassify2} there are only two possible values for the parameter, which are both roots of the polynomial $z^2-|b|^{-r}(de)^2$.  If $a$ is a possible value for the parameter, then so is $-a$ by the transformation $x\mapsto x$, $y\mapsto -y$. On the other hand, suppose $\varphi$ is an automorphism which transforms $f_0$ to a normal form equation with parameter $\lambda_1+\lambda_2t$, and let $\kappa$ be the conjugation on $K$. Then $\kappa\circ\varphi\circ\kappa^{-1}$ is also an automorphism and transforms $f_0$ to the same normal form equation with the parameter replaced by $\kappa(\lambda_1+\lambda_2t)=\lambda_1-\lambda_2t$. Hence, $\lambda_1+\lambda_2t=-(\lambda_1-\lambda_2t)$ or $\lambda_1+\lambda_2t=\lambda_1-\lambda_2t$. Thus, $\lambda_1=0$ or $\lambda_2=0$, and the square of the parameter is a rational number. Therefore, $z^2-|b|^{-r}(de)^2$ is a polynomial with rational coefficients. 

Consider now the case where $r$ is even.  We either
have $a=+|b|^{\frac{r}{2}}de$ or $a=-|b|^{\frac{r}{2}}de$. By Theorem 33 of \cite{realclassify2} the parameter $a$ is uniquely determined.  Hence, by a similar argument as above, $|b|^{-\frac{r}{2}}a$ and thus $de$ is invariant under conjugation, and we conclude that $de\in\Q$.

The case where $r$ is even and $de<0$ is handled in lines \ref{line:caseIV2<0} - \ref{line:caseIV2<02}.
Taking the transformation $x\mapsto y$, $y\mapsto x$ into account, $f$ is both of type $Y_{r,r}^{\sigma+}$ with $a=|b|^{\frac{r}{2}}de$, and of type $Y_{r,r}^{\sigma-}$ with $a=-|b|^{\frac{r}{2}}de$. 

The case where $r$ is even and $de>0$ is handled in lines \ref{line:caseIV2m2} - \ref{line:caseIV2(b)}. 
Since $de>0$, the coefficient of $x^r$ and the parameter $a$ in the normal form equation have the same sign. For any polynomial $dx^r+bx^2y^2+ey^r$ in the right equivalence class of $f$, the sign of $d$ is the same. Since the extension $\mathbb Q\subset K=\mathbb Q[t]$ is quadratic, $d$ is of the form $d=\lambda_1+\lambda_2 t$, with $\lambda_i\in\mathbb Q$. Hence, for $\widetilde d(T)=\lambda_1+\lambda_2 T\in\mathbb Q[T]$ and $t_{1,2}=\pm \sqrt{D}$, we have $\sign(\widetilde d(t_1))=\sign(\widetilde d(t_2))$. Thus $\sign(d)=\sign(\widetilde d(0))=\sign(\lambda_1)$.

\end{itemize}

\subsection{The \texorpdfstring{$\boldsymbol{\widetilde Y_{r}}$}{\widetilde Y_r} case}%
\label{sec Yr}
In this case the general algorithm can be followed, except for a slight modification in Step (III). Note that  there are no monomials in $\supp(f)$ underneath $\Gamma(T)$ for $T=\widetilde Y_{r}$. We start by removing monomials in $f$ on $\Gamma(T)$ that are not in $\supp(T)$. To do this, we apply a linear transformation to $f$ such that $\supp(\jet(f,d))=\supp(T,d)$ for $d=4$. This transformation is implemented in Algorithm \ref{alg:YrLinear}.

\begin{algorithm}[h]
\caption{}%
\label{alg:YrLinear}
\begin{algorithmic}[1]
\Require{$f\in\Q[x,y]$ of real type $\widetilde Y_r$}
\Ensure{$h\in\Q[x,y]$ with $f\sim h$  and $\supp(\jet(h,4))=\supp(T,4)$}
\State $g:=\jet(f,4)$
\State factorize $g$ over $\Q$ as $cf_1^2$ with $f_1=x^2+axy+by^2$, and $a,b,c\in\Q$
\State apply $x\mapsto x+\frac{a}{2}y$, $y\mapsto y$ to $f$
\Return $f$
\end{algorithmic}
\end{algorithm}

Next we remove the monomials in $f$ above $\Gamma(T)$ not in $\supp(T)$. Note that the Arnold parameter system of germs of type $\widetilde Y_r$ contains only the monomial $x^r$. Hence, similarly to the case of germs that are equivalent to a germ with a non-degenerate Newton boundary, we iteratively consider the degree $j$ part of $f$, for increasing $j$ with $d_w<j<r$. Since the coefficients of the elements 
of a basis of $\mathbb R[x,y]/\langle \frac{\partial{f}}{\partial{x}},\frac{\partial{f}}{\partial{y}}\rangle$ in degree $j$ are zero in the Arnold normal form, we can write this polynomial as
\[\frac{\partial f_0}{\partial x}v_1+\frac{\partial f_0}{\partial y}v_2,\]
with $v_1,v_2\in\R[x,y]$. Applying the transformation $x\mapsto x-v_1$, $y\mapsto y-v_2$ results in transforming the part of $f$ of  degree $j$ to $0$. Similarly, we then can transform the degree $r$ part of $f$ to $cx^r$, $c\in \mathbb{R}$. Since a germ of type $\widetilde Y_r$ is $r$-determined, terms of higher degree can be discarted. We summarize this procedure in Algorithm~\ref{alg:Yrabove}.

\begin{algorithm}[h]
\caption{Step (III) in the general algorithm for the real main types $\widetilde{Y}_{r}$}%
\label{alg:Yrabove}
\begin{algorithmic}[1]
\Require{$f\in\Q[x,y]$ of real type $\widetilde Y_r$ such that $\supp(\jet(f,4))=\supp(T,4)$}
\Ensure{$h\in\Q[x,y]$ with $f\sim h$  and $\supp(\jet(h,r))=\supp(T,r)$
}
\State $g_0=\jet(f,4)=aq^2$, \quad$q=x^2+by^2\in\mathbb R[x,y]$, $a,b\in\mathbb R$
\State $i:=5$
\State $p_5 = (\jet(f,5)-\jet(f,4))/(4aq)$
\While{$i\neq r+1$}
  \State $px_i = p_i(x,0)$;
  \State $py_i = p_i-px_i$;
  \State apply $x\mapsto x-\frac{px_i}{x}$, $y\mapsto y- \frac{py_i}{by}$ to $f$;
  \State $i=i+1$
   \State $p_i = (\jet(f,i)-\jet(f,4))/(4aq)$
\EndWhile

\Return $f$
\end{algorithmic}
\end{algorithm}

\pagebreak[4]

\bibliographystyle{elsarticle-harv}

\end{document}